\documentclass[reqno]{amsart}

\usepackage{tikz}
\usepackage{hyperref}

\newcommand*{\mailto}[1]{\href{mailto:#1}{\nolinkurl{#1}}}

\newtheorem{theorem}{Theorem}[section]
\newtheorem{definition}[theorem]{Definition}
\newtheorem{lemma}[theorem]{Lemma}
\newtheorem{proposition}[theorem]{Proposition}
\newtheorem{corollary}[theorem]{Corollary}

\newtheorem{remark}[theorem]{Remark}

\newcommand{\R}{{\mathbb R}}

\newcommand{\Z}{{\mathbb Z}}
\newcommand{\C}{{\mathbb C}}


\newcommand{\E}{\mathrm{e}}
\newcommand{\I}{\mathrm{i}}

\newcommand{\supp}{\mathrm{supp}}
\newcommand{\tr}{\mathrm{tr}}
\newcommand{\im}{\mathrm{Im}}

\newcommand{\per}{{\mathrm{per}}}
\newcommand{\loc}{{\mathrm{loc}}}
\newcommand{\cc}{{\mathrm{c}}}

\newcommand{\cprod}{\prod^{\leftrightarrow}}
\newcommand{\csum}{\sum^{\leftrightarrow}}

\newcommand{\ledot}{\,\cdot\,}
\newcommand{\redot}{\cdot\,}

\newcommand{\qd}{{[1]}}
\newcommand{\dip}{\upsilon}
\newcommand{\period}{\ell}
\newcommand{\inds}{\mathcal{I}}
\newcommand{\Dper}{\mathcal{D}_{\per}}

\numberwithin{equation}{section}

\begin{document}

\title[Trace formulas and continuous dependence]{Trace formulas and continuous dependence of spectra for~the~periodic~conservative Camassa--Holm flow}
 
\author[J.\ Eckhardt]{Jonathan Eckhardt}
\address{Department of Mathematical Sciences\\ Loughborough University\\ Loughborough LE11 3TU \\ UK\\  and  Faculty of Mathematics\\ University of Vienna\\ Oskar-Morgenstern-Platz 1\\ 1090 Wien\\ Austria}
\email{\mailto{J.Eckhardt@lboro.ac.uk};\ \mailto{jonathan.eckhardt@univie.ac.at}}
\urladdr{\url{http://homepage.univie.ac.at/jonathan.eckhardt/}}

\author[A.\ Kostenko]{Aleksey Kostenko}
\address{Faculty of Mathematics and Physics\\ University of Ljubljana\\ Jadranska 19\\ 1000 Ljubljana\\ Slovenia\\ and Faculty of Mathematics\\ University of Vienna\\ Oskar-Morgenstern-Platz 1\\ 1090 Wien\\ Austria}
\email{\mailto{Aleksey.Kostenko@fmf.uni-lj.si};\ \mailto{Oleksiy.Kostenko@univie.ac.at}}
\urladdr{\url{http://www.mat.univie.ac.at/~kostenko/}}

\author[N.\ Nicolussi]{Noema Nicolussi}
\address{Faculty of Mathematics\\ University of Vienna\\
Oskar-Morgenstern-Platz 1\\ 1090 Vienna\\ Austria}
\email{\mailto{noema.nicolussi@univie.ac.at}}

\thanks{\href{http://dx.doi.org/10.1016/j.jde.2019.09.048}{J.\ Differential Equations {\bf 268} (2020), no.~6, 3016--3034}}
\thanks{{\it Research supported by the Austrian Science Fund (FWF) under Grants No.~P29299 and P28807 as well as by the Slovenian Research Agency (ARRS) under Grant No.~J1-9104}}

\keywords{Periodic Camassa--Holm flow, spectrum, trace formulas, continuous dependence}
\subjclass[2010]{Primary 34L05, 34B07; Secondary 34L15, 37K15}  

\begin{abstract}
This article is concerned with the isospectral problem 
\[
-f'' + \frac{1}{4} f = z\, \omega f + z^2 \dip f
\]
for the periodic conservative Camassa--Holm flow, where $\omega$ is a periodic real distribution in $H^{-1}_{\loc}(\R)$ and $\dip$ is a periodic non-negative Borel measure on $\R$. 
We develop basic Floquet theory for this problem, derive trace formulas for the associated spectra and establish continuous dependence of these spectra on the coefficients with respect to a weak$^\ast$ topology.  
\end{abstract}

\maketitle

\section{Introduction}
 
 In this article, we are interested in direct spectral theory for the isospectral problem of the conservative Camassa--Holm flow in the periodic setting. 
 The Camassa--Holm equation 
  \begin{equation}\label{eqnCH}
   u_{t} -u_{xxt}  = 2u_x u_{xx} - 3uu_x + u u_{xxx} 
  \end{equation}
  has become one of the most extensively studied partial differential equations during the last few decades. 
  Due to the vast amount of literature devoted to various aspects of this equation, we only refer to a brief selection of articles \cite{besasz02, besasz05, brco07, caho93, coes98, comc99, como00, geho08, hora07, xizh00} containing further information. 
 For an arbitrary but fixed period $\period>0$, the natural phase space $\Dper$ for the periodic conservative Camassa--Holm flow is defined as follows; compare \cite[Definition~2.4]{grhora13}. 
 
\begin{definition}
The set $\Dper$ consists of all pairs $(u,\mu)$ such that $u$ is an $\period$-periodic, real-valued function in $H^1_{\loc}(\R)$ and $\mu$ is an $\period$-periodic, non-negative Borel measure on $\R$ with
\begin{align}\label{eqnmuac}
 \mu(B) \geq \int_B u(x)^2 + u'(x)^2\, dx 
\end{align}
for every Borel set $B\subseteq\R$.
\end{definition}

 Associated with each pair $(u,\mu)$ in $\Dper$ is a spectral problem of the form 
\begin{align}\label{eqnISP}
 -f'' + \frac{1}{4} f = z\, \omega f + z^2 \dip f,
\end{align}
where $\omega = u-u''$ is an $\period$-periodic real distribution in $H^{-1}_{\loc}(\R)$  and the $\period$-periodic non-negative Borel measure $\dip$ on $\R$ is defined such that  
\begin{align}
  \mu(B) =  \int_B u(x)^2 + u'(x)^2\, dx  + \dip(B) 
\end{align}
for every Borel set $B\subseteq\R$.
Due to the low regularity of the coefficients, this differential equation has to be understood in a distributional sense in general (see Section~\ref{secBDE} below for details). 
The relevance of the spectral problem~\eqref{eqnISP} stems from the fact that it serves as an isospectral problem for the conservative Camassa--Holm flow; see \cite{ConservCH, LagrangeCH, ConservMP}. 

Virtually the entire literature dedicated to the periodic isospectral problem for the conservative Camassa--Holm flow deals with the case when the measure $\dip$ vanishes identically and the coefficient $\omega$ obeys additional smoothness or positivity restrictions. 
More precisely, a fairly complete spectral picture of the problem 
\begin{align}\label{eqnISPch}
 -f'' + \frac{1}{4} f = z\, \omega f
\end{align}
has been given in \cite{co97a,co97,comc99} when $\omega$ is at least a continuous function. 
Under the additional assumption that $\omega$ is sufficiently smooth and positive, the inverse spectral problem for~\eqref{eqnISPch} has been considered in \cite{baklko03, co98, ko04}. 
Somewhat related in this context, smooth finite gap solutions of the Camassa--Holm equation, respectively its two-component generalization, have been studied in \cite{comc99, CH2Real, geho08}. 
Another kind of finite gap solutions, periodic multi-peakon solutions of the form
\begin{align}
    u(x,t) = \sum_{k\in\Z} \sum_{n=1}^N p_n(t) \E^{-|x-q_n(t) - k\period|},
  \end{align}
also constitute a reasonably well-studied class. 
It has been shown in \cite{alcafehoma01, alfe01, besasz02, besasz05} that in this case, the coefficient $\omega$ in~\eqref{eqnISPch} can be recovered from the associated spectral data, which led to a representation of periodic multi-peakon solutions in terms of  theta functions. 
However, a complete solution of the corresponding inverse spectral problem in this situation has been given only recently in \cite{InvPeriodMP} by considering the generalized differential equation~\eqref{eqnISP}.
We point out again that this modification of the isospectral problem is in full accordance with the notion of global conservative solutions of the Camassa--Holm equation; see \cite{grhora13, hora08b}.

Our aim in the present article is to develop some basic direct spectral theory for the isospectral problem~\eqref{eqnISP} with periodic coefficients covering the entire natural phase space $\Dper$. 
We first start with a brief discussion of the differential equation~\eqref{eqnISP} in Section~\ref{secBDE}, including several necessary facts about solutions. 
Section~\ref{secPS} then introduces the associated spectra (periodic/antiperiodic as well as Dirichlet) and other useful quantities like the Floquet discriminant. 
In the following section, we derive trace formulas for these spectra, which play a crucial role for inverse spectral theory, as well as sharp lower bounds for (the moduli of) the periodic/antiperiodic eigenvalues. 
We then proceed to show in Section~\ref{secCE} that the spectra depend continuously on the coefficients with respect to a weak$^\ast$ topology. 
It should be mentioned here that continuous dependence and sharp lower bounds for the periodic eigenvalues have been established recently in~\cite{chmezh18} for the definite case, when the measure $\dip$ vanishes identically and the distribution $\omega$ is positive.

\subsection*{Notation} 
Let us first introduce several spaces of functions and distributions.  
We denote with $H^1_{\loc}(\R)$, $H^1(\R)$ and $H^1_{\cc}(\R)$ the usual Sobolev spaces:
\begin{align}
H^1_{\loc}(\R) & =  \lbrace f\in AC_{\loc}(\R) \,|\, f'\in L^2_{\loc}(\R) \rbrace, \\
 H^1(\R) & = \lbrace f\in H^1_{\loc}(\R) \,|\, f,\, f'\in L^2(\R) \rbrace, \\ 
 H^1_{\cc}(\R) & = \lbrace f\in H^1(\R) \,|\, \supp(f) \text{ compact} \rbrace.
\end{align}
Furthermore, the space of distributions $H^{-1}_{\loc}(\R)$ is the topological dual of $H^1_{\cc}(\R)$ and the space $H^1_{\per}(\R)$ consists of all $\period$-periodic  functions in $H^1_\loc(\R)$, that is,
 \begin{align}
H^1_{\per}(\R)  =  \lbrace f\in H^1_{\loc}(\R) \,|\, f(x+\period) = f(x)  \text{ for all } x\in\R\rbrace,
\end{align}
where $\period>0$ is a fixed given period.
 
  For integrals of a function $f$ which is locally integrable with respect to a Borel measure $\nu$ on an interval $I$, we will employ the convenient notation 
\begin{align}\label{eqnDefintmu}
 \int_x^y f\, d\nu = \begin{cases}
                                     \int_{[x,y)} f\, d\nu, & y>x, \\
                                     0,                                     & y=x, \\
                                     -\int_{[y,x)} f\, d\nu, & y< x, 
                                    \end{cases} \qquad x,\,y\in I, 
\end{align}
 rendering the integral left-continuous as a function of $y$. 
 If the function $f$ is locally absolutely continuous on $I$ and $g$ denotes a left-continuous distribution function of the measure $\nu$, then we have the integration by parts formula 
\begin{align}\label{eqnPI}
  \int_{x}^y  f\, d\nu =  \left. g f\right|_x^y - \int_{x}^y g(\xi) f'(\xi) d\xi, \quad x,\,y\in I,
\end{align}
 which will be used occasionally throughout this article. 

 Given real numbers $\eta_i$ indexed by some set $\inds$ such that there are only finitely many $\eta_i$ with $|\eta_i|\geq \varepsilon$ for every positive $\varepsilon$, we introduce the notation
\begin{align}\label{eqncsumdef}
 \csum_{i\in\inds} \eta_i = \lim_{\varepsilon\downarrow0} \mathop{\sum_{i\in\inds}}_{|\eta_i|\geq\varepsilon} \eta_i,
\end{align}
provided that the limit exists. 
Similarly, subject to existence, we shall write
\begin{align}\label{eqncproddef}
 \cprod_{i\in\inds} (1-\eta_i z) = \lim_{\varepsilon\downarrow0} \mathop{\prod_{i\in\inds}}_{|\eta_i|\geq\varepsilon} (1-\eta_i z), \quad z\in\C,
\end{align}
where the limit is meant to be taken in the topology of locally uniform convergence. 
The limit in~\eqref{eqncproddef} exists if and only if the limit in~\eqref{eqncsumdef} exists and the sum
\begin{align}
 \sum_{i\in\inds} \eta_i^2
\end{align}
is finite. 
In this case, upon denoting  the entire function in~\eqref{eqncproddef}  with $\Lambda$, we have 
\begin{align}\label{eqncprodatzero}
 \csum_{i\in\inds} \eta_i & = - \dot{\Lambda}(0), & \sum_{i\in\inds} \eta_i^2 & = \dot{\Lambda}(0)^2 - \ddot{\Lambda}(0).
\end{align}

\section{The basic differential equation}\label{secBDE}

Throughout this article, we fix a period $\ell>0$ and let $(u,\mu)$ be a pair in $\Dper$. 
 Associated with such a pair  is an $\period$-periodic distribution $\omega$ in $H^{-1}_{\loc}(\R)$ defined by 
\begin{align}\label{eqnDefomega}
 \omega(h) = \int_\R u(x)h(x)dx + \int_\R u'(x)h'(x)dx, \quad h\in H^1_c(\R),
\end{align}
so that $\omega = u - u''$ in a distributional sense, as well as an $\period$-periodic non-negative Borel measure $\dip$ on $\R$ that satisfies
\begin{align}\label{eqnDefdip}
  \mu(B) =  \int_B u(x)^2 + u'(x)^2\, dx  + \dip(B) 
\end{align}
for every Borel set $B\subseteq\R$. 
We consider the differential equation 
 \begin{align}\label{eqnDEho}
  -f'' + \frac{1}{4} f = z\, \omega f + z^2 \dip f, 
 \end{align}
 where $z$ is a complex spectral parameter. 
 Of course, this differential equation has to be understood in a distributional way in general; compare \cite{ConservCH, IndefiniteString, gewe14, sash03}.   
 All of the facts stated without proofs in this section may be found in \cite[Appendix~A]{ConservCH}, \cite[Appendices~A and~B]{CHTrace}, \cite{CHPencil}.  
  
  \begin{definition}\label{defSolution}
  A solution of~\eqref{eqnDEho} is a function $f\in H^1_{\loc}(\R)$ such that 
 \begin{align}\label{eqnODEweak}
   \int_{\R} f'(x) h'(x) dx + \frac{1}{4} \int_\R f(x)h(x)dx = z\, \omega(fh) + z^2 \int_\R f h \,d\dip 
 \end{align} 
 for every function $h\in H^1_\cc(\R)$.
 \end{definition}
 
Let us emphasize here that, in general, the derivative of such a solution will only be a locally square integrable function.

\begin{lemma}\label{lemEquQD}
 If the function $f$ is a solution of the differential equation~\eqref{eqnDEho}, then there is a unique left-continuous function $f^\qd$ such that 
 \begin{align}\label{eqnfqd} 
     f^\qd = f' - z u' f
\end{align} 
almost everywhere. 
The function $f^\qd$ satisfies 
  \begin{align}\label{eqnDEfqd}
    \left. f^\qd \right|_x^y & =  \frac{1}{4} \int_x^y f(\xi)d\xi  -z\int_x^y u(\xi)f(\xi)+u'(\xi)f'(\xi)\,d\xi  - z^2 \int_x^y f \,d\dip 
  \end{align}
  for all $x,\,y\in\R$.
 \end{lemma}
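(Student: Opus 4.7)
The plan is to extract $f^\qd$ directly from the weak formulation (2.6) by first rewriting the action of $\omega$ in terms of $u$. Applying the definition (2.2) with the test function $fh \in H^1_\cc(\R)$ (which makes sense since $f$ is continuous and $f' \in L^2_\loc(\R)$), one computes $\omega(fh) = \int_\R u f h\, dx + \int_\R u' f' h\, dx + \int_\R u' f h'\, dx$. Substituting this into (2.6) and moving the term involving $h'$ to the left-hand side yields
\begin{align*}
 \int_\R (f' - zu'f)\, h'\, dx = \int_\R \bigl(-\tfrac{1}{4} f + z u f + z u' f'\bigr) h\, dx + z^2 \int_\R fh\, d\dip
\end{align*}
for every $h \in H^1_\cc(\R)$. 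In particular, the $L^2_\loc(\R)$ function $g := f' - zu'f$ has a locally finite complex Borel measure as its distributional derivative.

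Next, I would fix a base point $x_0 \in \R$ and define, for a constant $C$ still to be determined,
\begin{align*}
 f^\qd(y) := C + \frac{1}{4}\int_{x_0}^y f(\xi)\, d\xi - z\int_{x_0}^y \bigl(u(\xi) f(\xi) + u'(\xi) f'(\xi)\bigr)\, d\xi - z^2 \int_{x_0}^y f\, d\dip.
\end{align*}
The first two terms are locally absolutely continuous in $y$ (the integrands are in $L^1_\loc(\R)$, the product $u'f'$ being controlled by Cauchy--Schwarz), while the third is left-continuous by the convention (1.7); hence $f^\qd$ is left-continuous on $\R$. The asserted jump identity (2.8) then holds by construction, irrespective of the choice of $C$.

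It remains to verify that (2.7) can be achieved for a suitable choice of $C$. To this end, I would test $f^\qd$ against an arbitrary $h \in H^1_\cc(\R)$. Standard integration by parts (available because $h$ is compactly supported) handles the two absolutely continuous pieces, producing $-\int_\R \bigl(\tfrac14 f - zuf - zu'f'\bigr) h\, dx$. For the singular piece, the integration by parts formula (1.8) applied to $\int_a^b h\, d(f\dip)$ with $[a,b]$ containing $\supp(h)$ yields $\int_\R \Psi(y) h'(y)\, dy = -\int_\R f h\, d\dip$, where $\Psi(y) := \int_{x_0}^y f\, d\dip$ is the left-continuous primitive. Combining these two computations with the displayed identity above gives $\int_\R (f^\qd - g) h'\, dx = 0$ for every $h \in H^1_\cc(\R)$, so $f^\qd - g$ agrees almost everywhere with some constant; choosing $C$ to cancel this constant produces (2.7). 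Uniqueness of $f^\qd$ is then immediate, since two left-continuous functions that coincide almost everywhere must coincide pointwise.

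The only genuinely subtle point is the distributional identity $\int_\R \Psi h'\, dy = -\int_\R f h\, d\dip$ with the correct sign, which requires careful use of the left-continuous convention (1.7) and the integration by parts formula (1.8) in the presence of a possibly singular measure $\dip$. Once that is in hand, everything else reduces to a direct manipulation of the weak formulation together with the standard fact that an $L^2_\loc(\R)$ function whose distributional derivative vanishes is almost everywhere constant.
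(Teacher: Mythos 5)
Your argument is correct and complete: rewriting $\omega(fh)$ via the definition \eqref{eqnDefomega}, reading off the distributional derivative of $f'-zu'f$, defining the left-continuous candidate $f^\qd$ as the explicit antiderivative (with the convention \eqref{eqnDefintmu} and the sign of the singular term checked through \eqref{eqnPI}), and then matching constants is exactly the standard route; the paper itself states this lemma without proof, deferring to the cited appendices, where essentially this same argument is carried out. No gaps — the only delicate points (left-continuity of the $\dip$-integral, the identity $\int_\R \Psi h'\,dx = -\int_\R fh\,d\dip$, and uniqueness from left-continuity plus a.e.\ equality) are all handled correctly.
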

 
 We will refer to the function $f^\qd$ as the {\em quasi-derivative} of a solution $f$.
 A simple integration by parts turns~\eqref{eqnDEfqd} in Lemma~\ref{lemEquQD} into the useful identity 
 \begin{align}\begin{split}\label{eqnDEint}
   & \int_x^y f'(\xi)h'(\xi)d\xi + \frac{1}{4} \int_x^y f(\xi)h(\xi)d\xi \\
   & \qquad = \left.f^\qd h\right|_{x}^{y} + z\int_x^y u(\xi)f(\xi)h(\xi) + u'(\xi)(fh)'(\xi)\,d\xi + z^2 \int_x^y fh\, d\dip
 \end{split}\end{align}
 for all $x$, $y\in\R$ as long as the function $h$ belongs to $H^1_\loc(\R)$.
 The quasi-derivative also allows us to state existence and uniqueness results for initial value problems.
  
\begin{corollary}\label{corSolIVP}
 For every $a\in\R$ and $d_1$, $d_2\in\C$, there is a unique solution $f$ of the differential equation~\eqref{eqnDEho} with the initial conditions
 \begin{align}\label{eqnSolIV}
 f(a) & = d_1, &  f^\qd(a) & = d_2.
 \end{align} 
 If $d_1$, $d_2$ and $z$ are real, then the solution $f$ is real-valued as well. 
\end{corollary}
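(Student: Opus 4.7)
The plan is to convert the initial value problem into an equivalent Volterra-type integral system for the pair $(f, f^\qd)$ and solve it by Picard iteration. Combining the pointwise identity $f' = f^\qd + zu'f$ from Lemma~\ref{lemEquQD} with the integral identity~\eqref{eqnDEfqd}, the problem~\eqref{eqnDEho} with initial conditions~\eqref{eqnSolIV} is equivalent to the system
\begin{align*}
 f(y) &= d_1 + \int_a^y \bigl(f^\qd(\xi) + zu'(\xi)f(\xi)\bigr)\, d\xi, \\
 f^\qd(y) &= d_2 + \int_a^y \Bigl[\bigl(\tfrac14 - zu(\xi) - z^2 u'(\xi)^2\bigr)f(\xi) - zu'(\xi)f^\qd(\xi)\Bigr] d\xi - z^2\!\int_a^y f\, d\dip,
\end{align*}
where the forward direction uses~\eqref{eqnDEfqd} after substituting $f' = f^\qd + zu'f$, and the reverse direction (that a solution of this system is a solution in the sense of Definition~\ref{defSolution}) follows from the integration-by-parts identity~\eqref{eqnPI} and the weak form~\eqref{eqnDEint}.

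On any bounded interval $I\subseteq\R$ containing $a$, the right-hand side defines a map $T$ on the Banach space $X_I$ of bounded pairs $(f,f^\qd)$ with $f$ continuous and $f^\qd$ left-continuous on $I$, equipped with the supremum norm. Since $u$, $u'$, and $(u')^2$ are locally integrable and $\dip$ is a locally finite measure, $T$ is of Volterra type with respect to the finite measure
\begin{align*}
 d\nu(\xi) = C\bigl(1 + |u(\xi)| + |u'(\xi)| + u'(\xi)^2\bigr)\, d\xi + C\, d\dip(\xi)
\end{align*}
on $I$, where $C$ depends only on $|z|$. Iterating yields the standard estimate $\|T^n F - T^n G\|_\infty \leq \nu(I)^n/n! \cdot \|F-G\|_\infty$, so $T$ is eventually a contraction and the Banach fixed-point theorem produces a unique fixed point on $I$. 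Letting $I$ exhaust $\R$ and invoking uniqueness to paste local solutions together furnishes a unique global solution.

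When $z$, $d_1$, $d_2$ are real, all coefficients entering $T$ are real and $\dip$ is non-negative, so $T$ preserves the closed subspace of real-valued pairs in $X_I$ and the fixed point is automatically real-valued. The main technical hurdle is that $u'$ lies only in $L^2_{\loc}$ and $\dip$ may carry point masses, so the classical Picard estimates with bounded coefficients are unavailable. The remedy is to measure the Volterra decay against the single finite measure $\nu$, which absorbs both the Lebesgue contribution with $L^1_{\loc}$ weights and the possibly singular contribution coming from $\dip$; once this setup is in place, the remainder is a routine fixed-point argument, with the left-continuity convention from~\eqref{eqnDefintmu} ensuring that the iterates stay in $X_I$.
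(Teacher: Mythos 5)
Your argument is correct: the integral system you set up for $(f,f^\qd)$ is the right reformulation of~\eqref{eqnDEho} with~\eqref{eqnSolIV}, and the measure-weighted Volterra/Picard estimate (with the left-continuity convention~\eqref{eqnDefintmu}) handles the $L^2_{\loc}$ coefficient $u'$ and possible point masses of $\dip$ correctly. The paper itself offers no proof of this corollary---it is among the facts quoted from \cite[Appendix~A]{ConservCH}, \cite{CHTrace}, \cite{CHPencil}---and those sources argue in essentially the same way, rewriting the initial value problem as a Volterra-type integral system and solving it by successive approximations, so your proposal matches the intended proof.
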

    
 It is also possible to introduce a Wronskian $W(f,g)$ of two solutions $f$ and $g$. 

\begin{corollary}\label{corWronskian}
 For any two solutions $f$, $g$ of the differential equation~\eqref{eqnDEho}, there is a unique complex number $W(f,g)$ such that 
\begin{align}
W(f,g) = f(x)g'(x) -f'(x)g(x)
\end{align} 
 for almost all $x\in\R$. 
 The number $W(f,g)$ is zero if and only if the functions $f$ and $g$ are linearly dependent.
\end{corollary}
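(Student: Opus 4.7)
The plan is to build the Wronskian from the quasi-derivatives rather than directly from $f'$ and $g'$, since the latter are only locally square integrable while the former are well-defined pointwise (left-continuous) by Lemma~\ref{lemEquQD}. Concretely, I would set
\[
    W(f,g) := f(x) g^{\qd}(x) - f^{\qd}(x) g(x),
\]
and first show that the right-hand side does not depend on $x$. Since $f^{\qd} = f' - z u' f$ and $g^{\qd} = g' - z u' g$ almost everywhere by \eqref{eqnfqd}, the terms involving $u'$ cancel and one obtains $f g^{\qd} - f^{\qd} g = f g' - f' g$ almost everywhere, so once constancy is established both the identity and uniqueness in the statement follow immediately.

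The constancy is obtained from the integral identity \eqref{eqnDEint}. Applying it with the solution $f$ against the admissible test function $h = g \in H^1_\loc(\R)$, and then again with $g$ against $h = f$, yields two expressions for $\int_x^y f'g' \, d\xi + \frac{1}{4}\int_x^y fg \, d\xi$ that differ only in the boundary term: the first gives $\left.f^{\qd} g\right|_x^y$ and the second gives $\left.g^{\qd} f\right|_x^y$, while the bulk terms involving $\omega$ and $\dip$ coincide (note that $u f g + u'(fg)'$ is symmetric in $f$ and $g$, and so is $\int fg\, d\dip$). Subtracting the two identities gives $\left.f g^{\qd} - f^{\qd} g\right|_x^y = 0$ for all $x,\, y \in \R$, so $W(f,g)$ is indeed a constant.

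Finally, to handle the characterization of linear dependence, I would fix any $a \in \R$ and observe that the constancy just proven shows
\[
    W(f,g) = f(a)\, g^{\qd}(a) - f^{\qd}(a)\, g(a),
\]
so $W(f,g) = 0$ is exactly the condition that the vectors $(f(a), f^{\qd}(a))$ and $(g(a), g^{\qd}(a))$ in $\C^2$ are linearly dependent. If say $(g(a), g^{\qd}(a)) \neq 0$, there is a unique $\lambda \in \C$ with $f(a) = \lambda g(a)$ and $f^{\qd}(a) = \lambda g^{\qd}(a)$, and then the uniqueness part of Corollary~\ref{corSolIVP} forces $f = \lambda g$ throughout $\R$; in the remaining case both pairs of initial data vanish, so $f$ and $g$ are both identically zero by the same uniqueness result. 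The converse (linear dependence implies $W(f,g)=0$) is trivial from the defining expression.

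I do not expect any serious obstacle. The only subtle point is that $fg' - f'g$ is only defined almost everywhere and need not be left-continuous, so one has to phrase the argument through the quasi-derivative in order for pointwise manipulations and the application of \eqref{eqnDEint} to make sense; once this bookkeeping is in place the proof reduces to an entirely formal cancellation.
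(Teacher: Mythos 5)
Your proof is correct; the paper itself states this corollary without proof (it is one of the facts deferred to \cite{ConservCH}, \cite{CHTrace}, \cite{CHPencil}), and your argument---defining $W(f,g)$ through the quasi-derivatives, obtaining constancy of $f g^{\qd}-f^{\qd}g$ from the symmetry of \eqref{eqnDEint} in $f$ and $h$, recovering $fg'-f'g$ almost everywhere by the cancellation of the $zu'$ terms, and characterizing linear dependence through the uniqueness statement of Corollary~\ref{corSolIVP}---is exactly the standard route taken in those references. I see no gaps, and your remark that the pointwise bookkeeping must go through the left-continuous quasi-derivative is the right subtlety to flag.
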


  We proceed with a result about continuous dependence of solutions to initial value problems of the form in Corollary~\ref{corSolIVP} on the coefficients of the differential equation. 
  In order to state it, let $(u_k,\mu_k)$ be a sequence of pairs in $\Dper$ and define corresponding distributions $\omega_k$ in $H^{-1}_{\loc}(\R)$ and non-negative Borel measures $\dip_k$ in the same way as for $(u,\mu)$. 
  
 \begin{lemma}\label{lemAppIVPsolcon}
  Fix $a\in\R$, $d_1$, $d_2\in\C$ and let $f$ be the solution of the differential equation~\eqref{eqnDEho} with the initial conditions~\eqref{eqnSolIV}.      
  Suppose that the measure $\dip$ has no mass in $a$, that the functions $u_k$ converge to $u$ weakly in $H^1_\loc(\R)$\footnote{A sequence of functions $g_k$ converges to some $g$ weakly in $H^1_\loc(\R)$ if and only if the restricted functions $g_k|_I$ converge to $g|_I$ weakly in $H^1(I)$ for every compact interval $I$.} and that
  \begin{align}
    \int_\R  u_k'(x)^2 h(x) dx +  \int_\R h\, d\dip_k \rightarrow  \int_\R u'(x)^2 h(x) dx +  \int_\R h\, d\dip
  \end{align}
  for all functions $h\in C_\cc(\R)$.
  Then the solutions $f_k$ of the differential equation 
     \begin{align}\label{eqnDEhok}
  -f_k'' + \frac{1}{4} f_k = z\, \omega_k f_k + z^2 \dip_k f_k 
 \end{align}
  with the initial conditions 
   \begin{align}
   f_k(a) & = d_1, &  f_k^\qd(a) & = d_2,
 \end{align} 
  converge to $f$ weakly in $H^1_\loc(\R)$ and the quasi-derivatives $f_k^\qd$ converge to $f^\qd$ in $L^2_\loc(\R)$ as well as in every point where the measure $\dip$ has no mass.
 \end{lemma}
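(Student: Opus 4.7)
The plan is to extract subsequential limits and identify them via the uniqueness statement in Corollary~\ref{corSolIVP}. First, fix a compact interval $I$ containing $a$ and test the hypothesis against a nonnegative $h\in C_\cc(\R)$ equal to $1$ on $I$: this yields uniform bounds on $\int_I (u_k')^2 dx + \dip_k(I)$, and the Banach--Steinhaus principle applied to the weak $H^1_\loc$ convergence bounds $\|u_k\|_{H^1(I)}$ uniformly. The crucial move is to use the identity $f_k' = f_k^\qd + z u_k' f_k$ from Lemma~\ref{lemEquQD} in~\eqref{eqnDEfqd}: the troublesome product $u_k' f_k'$ then becomes $u_k' f_k^\qd + z(u_k')^2 f_k$, with the $(u_k')^2 f_k$ piece absorbing into the composite measure $\nu_k := (u_k')^2 dx + d\dip_k$ which converges weakly-$\ast$ by hypothesis. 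This recasts the IVP as the coupled Volterra system
\begin{align*}
 f_k(y) &= d_1 + \int_a^y \bigl( f_k^\qd(\xi) + z u_k'(\xi) f_k(\xi)\bigr) d\xi, \\
 f_k^\qd(y) &= d_2 + \tfrac{1}{4} \int_a^y f_k\, d\xi - z \int_a^y u_k f_k\, d\xi - z \int_a^y u_k' f_k^\qd\, d\xi - z^2 \int_a^y f_k\, d\nu_k,
\end{align*}
and a Gronwall argument yields uniform bounds for $f_k$ in $H^1(I)$ and for $f_k^\qd$ in $L^\infty(I)$.

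Second, by Banach--Alaoglu and Rellich extract a subsequence with $f_k \to g$ weakly in $H^1_\loc$, hence uniformly on compacts. The same integral representation exhibits $f_k^\qd$ as a family of locally BV functions with uniformly bounded total variation, so Helly's selection principle furnishes a further subsequence along which $f_k^\qd(x) \to F(x)$ at every $x\in I$ outside some countable set. Pass to the limit in the Volterra system term by term: uniform convergence handles $\int f_k$ and $\int u_k f_k$ at once; dominated convergence upgrades Helly pointwise convergence of $f_k^\qd$ to strong $L^p_\loc$ convergence for every $p<\infty$, which combined with weak $L^2_\loc$ convergence $u_k' \to u'$ gives $\int u_k' f_k^\qd \to \int u' F$; and $\int f_k\, d\nu_k \to \int g\, d\nu$ follows by splitting as $\int (f_k-g)\, d\nu_k + \int g\, d\nu_k$, where the first piece is controlled by $\|f_k-g\|_\infty\nu_k(I)\to 0$ and the second by approximating $\chi_{[a,y)}$ by continuous cutoffs (allowed since $\dip$ has no mass at $a$ by hypothesis and $y$ may be chosen outside the countable $\dip$-atom set).

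Passing to the limit gives
\begin{align*}
 g(y) &= d_1 + \int_a^y \bigl( F(\xi) + z u'(\xi) g(\xi)\bigr) d\xi, \\
 F(y) &= d_2 + \tfrac{1}{4} \int_a^y g - z \int_a^y u g - z \int_a^y u' F - z^2 \int_a^y g\, d\nu,
\end{align*}
with $\nu := (u')^2 dx + d\dip$. Differentiating the first identity a.e.\ and invoking Lemma~\ref{lemEquQD} identifies $F$ as $g^\qd$, and substituting back turns the second equation into~\eqref{eqnDEfqd} for $g$, showing that $g$ is a solution of~\eqref{eqnDEho} with $g(a) = d_1$ and $g^\qd(a) = d_2$ (using once more that $\dip$ does not charge $a$). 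Uniqueness via Corollary~\ref{corSolIVP} forces $g = f$ and $F = f^\qd$; since every subsequence admits a further subsequence converging to the same pair, the full sequences converge as claimed. The pointwise convergence of $f_k^\qd$ at all $x$ with $\dip(\{x\})=0$ now follows from the standard fact that weak convergence of uniformly bounded BV functions implies pointwise convergence at continuity points of the limit, and the asserted $L^2_\loc$ convergence then follows via dominated convergence from the uniform $L^\infty_\loc$ bound together with pointwise convergence off the countable $\dip$-atom set.

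The main obstacle is the passage to the limit in $\int_a^y u_k' f_k^\qd\, d\xi$: a priori neither factor converges strongly, and bilinear expressions in weak limits are generically discontinuous. The substitution $f_k' = f_k^\qd + z u_k' f_k$ is what saves the argument, since it isolates the quasi-derivative as a locally BV object by sweeping the offending $(u_k')^2$ factor into the composite measure $\nu_k$, making Helly's theorem applicable; once pointwise convergence of $f_k^\qd$ is in hand, the familiar weak--strong pairing closes the proof.
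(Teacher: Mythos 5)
The paper itself offers no proof of this lemma to compare against -- it is one of the facts deferred to \cite[Appendix~A]{ConservCH}, \cite{CHTrace} and \cite{CHPencil} -- so your argument has to be judged on its own, and it is correct and is the natural one: the decisive substitution $f_k'=f_k^\qd+zu_k'f_k$, which sweeps the quadratic term $(u_k')^2$ into the measure $\nu_k=(u_k')^2\,dx+d\dip_k$ that the hypothesis actually controls, the Gronwall bound for the resulting Volterra system, Helly selection plus the weak--strong pairing for $\int u_k'f_k^\qd$, the portmanteau argument for the positive measures $\nu_k$ at non-atoms, and the identification of all subsequential limits through Corollary~\ref{corSolIVP} are exactly the right tools here. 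Two remarks. First, your closing appeal to the ``standard fact'' that weak convergence of uniformly bounded BV functions implies pointwise convergence at continuity points of the limit is false as stated: $g_k=\chi_{(-1/k,0]}$ is left-continuous, uniformly bounded in variation, converges to $0$ in $L^1$ and even has distributional derivatives tending to $0$ weakly$^\ast$, yet $g_k(0)=1$; the obstruction is cancellation of signed derivative measures. You do not need that fact: since the atoms of $\nu=(u')^2dx+d\dip$ are exactly the atoms of $\dip$, ``outside the countable $\dip$-atom set'' already means ``at every point where $\dip$ has no mass'', and your own limit passage in the second Volterra identity -- which is valid at every such $y$ precisely because the $\nu_k$ are nonnegative, so the genuine portmanteau theorem applies -- combined with the sub-subsequence principle and the fact that $f^\qd$ satisfies the same identity at every $y$, gives the asserted pointwise convergence of the full sequence; the $L^2_\loc$ statement then follows by dominated convergence as you say. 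Second, when you conclude that the limit $g$ solves~\eqref{eqnDEho}, you implicitly use the converse of Lemma~\ref{lemEquQD}: that $g\in H^1_\loc(\R)$ together with a left-continuous function satisfying $g^\qd=g'-zu'g$ a.e.\ and the integral identity~\eqref{eqnDEfqd} is a solution in the sense of Definition~\ref{defSolution}. This follows by reversing the integration by parts leading to~\eqref{eqnDEint}, but it deserves a sentence, since the paper records only the forward direction.
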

    
  Let us fix an arbitrary base point $a\in\R$ and consider the fundamental system of solutions $c(z,\redot)$, $s(z,\redot)$ of the differential equation~\eqref{eqnDEho} with the initial conditions
 \begin{align}\label{eqnFSentIC}
   c(z,a) & = s^\qd(z,a) = 1, & c^\qd(z,a) &  = s(z,a) = 0, 
 \end{align} 
 for every $z\in\C$. Note that when $z$ is zero, we have explicitly 
\begin{align}\label{eqnCSatzero}
c(0,x) & = \cosh\Bigl(\frac{x-a}{2}\Bigr), & s(0,x) & = 2 \sinh\Bigl(\frac{x-a}{2}\Bigr), \qquad x\in\R.
\end{align}
 We are now going to collect some further useful properties of these solutions. 
 
 \begin{proposition}\label{propcsent}
  For each $x\in\R$, the functions 
  \begin{align}\label{eqnFSent}
   z & \mapsto c(z,x), & z & \mapsto c^\qd(z,x), & z & \mapsto s(z,x), & z & \mapsto s^\qd(z,x),
  \end{align}
    are real entire of Cartwright class with only nonzero real and simple roots (as long as $x$ differs from $a$)  and for every bounded set $K\subset\R$ there are constants $A$, $B\in\R$ such that 
   \begin{align}
    |c(z,x)|,\, |c^\qd(z,x)|,\, |s(z,x)|,\, |s^\qd(z,x)| \leq A\E^{B|z|},  \quad z\in\C,~x\in K.
   \end{align}
    Moreover, we have 
  \begin{align}
    \label{eqndc0} \dot{c}(0,x) & =  \cosh\Bigl(\frac{x-a}{2}\Bigr) (u(x)-u(a)) -\sinh\Bigl(\frac{x-a}{2}\Bigr) \int_{a}^x u(\xi)d\xi, \\
    \label{eqndcp0} \dot{c}^\qd(0,x) & =  -\frac{1}{2} \sinh\Bigl(\frac{x-a}{2}\Bigr) (u(x)+u(a)) - \frac{1}{2} \cosh\Bigl(\frac{x-a}{2}\Bigr) \int_{a}^x u(\xi)d\xi, 
   \end{align}
  and
  \begin{align}   \label{eqnds0} \dot{s}(0,x) & = 2\sinh\Bigl(\frac{x-a}{2}\Bigr) (u(x)+u(a)) - 2 \cosh\Bigl(\frac{x-a}{2}\Bigr) \int_{a}^x u(\xi)d\xi, \\
    \label{eqndsp0} \dot{s}^\qd(0,x) & = - \cosh\Bigl(\frac{x-a}{2}\Bigr) (u(x)-u(a)) - \sinh\Bigl(\frac{x-a}{2}\Bigr) \int_{a}^x u(\xi)d\xi,
  \end{align}
  as well as  
  \begin{align}
    \begin{split}\label{eqnddc0}  \ddot{c}(0,x) & = \cosh\Bigl(\frac{x-a}{2}\Bigr) (u(x)-u(a))^2 + \cosh\Bigl(\frac{x-a}{2}\Bigr) \biggl(\int_{a}^x u(\xi)d\xi\biggr)^2 \\
                                                    & \quad - 2\sinh\Bigl(\frac{x-a}{2}\Bigr) (u(x)-u(a))\int_{a}^x u(\xi)d\xi  - 2\sinh\Bigl(\frac{x-a}{2}\Bigr) \int_{a}^x d\mu \\
                                                    & \quad + 2\int_{a}^x u(\xi)^2 \sinh\Bigl(\xi-\frac{x+a}{2}\Bigr) d\xi + 2\int_{a}^x \sinh\Bigl(\xi - \frac{x+a}{2}\Bigr) d\mu(\xi),                                                     
    \end{split}  \\
    \begin{split}
    \label{eqnddcp0} \ddot{c}^\qd(0,x) & = \frac{1}{2} \sinh\Bigl(\frac{x-a}{2}\Bigr) (u(x)+u(a))^2 + \frac{1}{2} \sinh\Bigl(\frac{x-a}{2}\Bigr) \biggl(\int_{a}^x u(\xi)d\xi\biggr)^2  \\
                                                    & \quad + \cosh\Bigl(\frac{x-a}{2}\Bigr) (u(x)+u(a))\int_{a}^x u(\xi)d\xi  - \cosh\Bigl(\frac{x-a}{2}\Bigr) \int_{a}^x d\mu \\
                                                    & \quad -  \int_{a}^x u(\xi)^2 \cosh\Bigl(\xi-\frac{x+a}{2}\Bigr) d\xi - \int_{a}^x \cosh\Bigl(\xi - \frac{x+a}{2}\Bigr) d\mu(\xi),                                                    
    \end{split} 
      \end{align}
  and
  \begin{align}
    \begin{split}
    \label{eqndds0} \ddot{s}(0,x) & = 2\sinh\Bigl(\frac{x-a}{2}\Bigr) (u(x)+u(a))^2 + 2\sinh\Bigl(\frac{x-a}{2}\Bigr) \biggl(\int_{a}^x u(\xi)d\xi\biggr)^2  \\
                                                    & \quad - 4\cosh\Bigl(\frac{x-a}{2}\Bigr) (u(x)+u(a))\int_{a}^x u(\xi)d\xi  - 4 \cosh\Bigl(\frac{x-a}{2}\Bigr) \int_{a}^x d\mu \\
                                                    & \quad + 4 \int_{a}^x u(\xi)^2 \cosh\Bigl(\xi-\frac{x+a}{2}\Bigr) d\xi + 4 \int_{a}^x \cosh\Bigl(\xi - \frac{x+a}{2}\Bigr) d\mu(\xi),                                                         
    \end{split} \\
    \begin{split}
    \label{eqnddsp0} \ddot{s}^\qd(0,x) & = \cosh\Bigl(\frac{x-a}{2}\Bigr) (u(x)-u(a))^2 + \cosh\Bigl(\frac{x-a}{2}\Bigr) \biggl(\int_{a}^x u(\xi)d\xi\biggr)^2 \\
                                                    & \quad + 2\sinh\Bigl(\frac{x-a}{2}\Bigr) (u(x)-u(a))\int_{a}^x u(\xi)d\xi  - 2\sinh\Bigl(\frac{x-a}{2}\Bigr) \int_{a}^x d\mu \\
                                                    & \quad - 2\int_{a}^x u(\xi)^2 \sinh\Bigl(\xi-\frac{x+a}{2}\Bigr) d\xi - 2\int_{a}^x \sinh\Bigl(\xi - \frac{x+a}{2}\Bigr) d\mu(\xi).                                                 
   \end{split} \end{align}
 \end{proposition}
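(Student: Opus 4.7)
My plan is to split the proposition into two tasks: the analytic and spectral statements about the $z$-dependence of \eqref{eqnFSent}, and the explicit computation of the first two $z$-derivatives at $z=0$.

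For the first task I would iterate the coupled integral system consisting of $c(z,x) = 1 + \int_a^x (c^\qd(z,\xi) + z\, u'(\xi) c(z,\xi))\, d\xi$ together with the equation for $c^\qd$ extracted from \eqref{eqnDEfqd}. A standard Picard--Gronwall argument represents $c(z,x)$ (and similarly $c^\qd, s, s^\qd$) as a norm-convergent power series in $z$ whose coefficients are iterated integrals in $u$, $u'$, and $\dip$; this yields entirety, real values for real $z$, and the exponential-type bound $|c(z,x)| \le A\E^{B|z|}$ locally uniformly in $x$. The claim on the zeros is spectral in nature: for $x \neq a$, $s(z,x) = 0$ characterises the eigenvalues of the Dirichlet problem for \eqref{eqnDEho} on the interval between $a$ and $x$, and analogously for the other three functions; since $(u,\mu)\in\Dper$ gives definiteness at $z=0$, each such boundary value problem is unitarily equivalent to a self-adjoint realisation of the pencil as constructed in the references cited at the start of Section~\ref{secBDE}, and its spectrum is real, simple, and separated from $0$. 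The exponential-type bound combined with the Hadamard factorisation over this real zero set then places each of the functions in \eqref{eqnFSent} in the Cartwright class.

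For the first $z$-derivatives at $z=0$ I would differentiate the weak formulation from Definition~\ref{defSolution} in $z$ and evaluate: $\dot{c}(0,\cdot)$ is characterised as the (distributional) solution of $-g'' + \tfrac{1}{4}g = \omega\, c(0,\cdot)$ with vanishing initial data at $a$. Using the $z=0$ Wronskian pair from \eqref{eqnCSatzero} and the addition formula $\sinh(A-B) = \sinh A \cosh B - \cosh A \sinh B$, variation of parameters expresses $\dot{c}(0,x)$ as the action of $\omega$ on an explicit smooth kernel; expanding $\omega$ via \eqref{eqnDefomega} and integrating the $u'$-piece by parts trades every $u'$ term for the boundary values $u(x), u(a)$ and the integral $\int_a^x u\, d\xi$, producing \eqref{eqndc0}. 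Formula \eqref{eqndcp0} then follows from the identity $\dot{c}^\qd(0,x) = \dot{c}'(0,x) - u'(x) c(0,x)$, and \eqref{eqnds0}, \eqref{eqndsp0} are obtained identically starting from $s$. The second-order formulas \eqref{eqnddc0}--\eqref{eqnddsp0} are proved by applying the same procedure once more: $\ddot{c}(0,\cdot)$ solves the analogous inhomogeneous problem with source $2\,\omega\,\dot{c}(0,\cdot) + 2\,c(0,\cdot)\,\dip$, and substituting the first-order expressions just obtained produces the quadratic combinations of $u(x), u(a)$, and $\int_a^x u\, d\xi$ on the right-hand sides, while the $u^2$-integrals coming from the $\omega$-action combine with the $\dip$-integrals into $\mu$-integrals via \eqref{eqnDefdip}.

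I expect the main obstacle to be the second-order step. Since $\omega$ is only a distribution and the source $u'\,\dot{c}'(0,\cdot)$ entering at second order is only locally integrable, every pairing with $\omega$ must be organised so as to act on an $H^1_{\cc}$-admissible product; the integrations by parts therefore have to be sequenced so that derivatives always fall on the smooth hyperbolic weights rather than on $u'$ or $\dot{c}'(0,\cdot)$. Once this bookkeeping is cleanly arranged, the remaining simplifications reduce to routine hyperbolic-trigonometric identities.
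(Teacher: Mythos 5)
The paper itself gives no proof of this proposition: it is one of the facts that Section~\ref{secBDE} imports from \cite[Appendix~A]{ConservCH}, \cite[Appendices~A and~B]{CHTrace} and \cite{CHPencil}, so your sketch can only be measured against the arguments there. Your second task (the formulas at $z=0$) is handled essentially the way those references do it: differentiating the equation in $z$, noting that $\dot{c}(0,\cdot)$ and $\ddot{c}(0,\cdot)$ solve $-g''+\tfrac14 g=\omega\,c(0,\cdot)$ and $-g''+\tfrac14 g=2\omega\,\dot{c}(0,\cdot)+2c(0,\cdot)\dip$ with zero data at $a$, and then using variation of parameters with the $z=0$ system \eqref{eqnCSatzero} together with integration by parts and \eqref{eqnDefdip}; your identification of the second-order source term is correct, and the bookkeeping issue you flag is real but routine.

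The genuine gaps are in the analytic part. First, your final step is wrong as stated: an exponential-type bound together with real zeros and Hadamard factorisation does \emph{not} place a function in the Cartwright class (the function $\E^{z}$ is of exponential type, real on $\R$ and zero-free, yet its logarithmic integral diverges). Cartwright class requires bounded type in both half-planes (Krein's theorem), which in the cited works is obtained from the Herglotz property of Weyl-type quotients built from $c$, $c^\qd$, $s$, $s^\qd$, not from the growth bound alone. Second, the ``standard'' Picard--Gronwall iteration on the system for $(f,f^\qd)$ coming from \eqref{eqnDEfqd} only yields a bound of the form $A\E^{B|z|^{2}}$, because the coefficient contains $z^{2}\dip$ and $z^{2}u'^{2}$; to get $A\E^{B|z|}$ you must first rescale, e.g.\ estimate the vector $(zf,f^\qd)$ for $|z|\geq1$, so that $z$ enters the system only linearly. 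Third, reality, nonvanishing and in particular \emph{algebraic} simplicity of the zeros do not follow from an appeal to ``a self-adjoint realisation'': the problem is a quadratic pencil with sign-indefinite $\omega$, and simplicity of the zeros of, say, $z\mapsto s(z,x)$ as an entire function needs a pencil-specific identity — a formula for $\dot{s}(z,x)$ as a positive quadratic expression in the eigenfunction, analogous to \eqref{eqnDeltadotpsi} and to the Herglotz argument in the proof of the Dirichlet interlacing lemma — rather than mere self-adjointness of a linearisation. With these three points repaired your outline matches the structure of the proofs in the references.
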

  
   We note that differentiation with respect to the spectral parameter is denoted with a dot and always meant to be done after taking quasi-derivatives.
 
  \begin{remark}\label{remET}
  It is possible to show that the exponential types of the entire functions in~\eqref{eqnFSent} are all the same and given by the modulus of the integral
  \begin{align}\label{eqnET}
    \int_{a}^x \rho(\xi) d\xi,
  \end{align}
  where $\rho$ is the square root of the Radon--Nikod\'ym derivative of the absolutely continuous part of the measure $\dip$ (with respect to the Lebesgue measure). 
 \end{remark}

 \section{Spectra}\label{secPS}

For an arbitrary but fixed base point $a\in\R$, we first introduce the {\em monodromy matrix} $M$ via 
 \begin{equation}\label{eq:cM}
 M(z)=\begin{pmatrix}
 c(z,a+\period) & s(z,a+\period)\\
 c^\qd(z,a+\period) & s^\qd(z,a+\period) 
 \end{pmatrix},\quad z\in\C, 
 \end{equation}
 as well as the {\em Floquet discriminant} $\Delta$  by 
 \begin{equation}\label{eq:Delta}
 \Delta(z)=\frac{\tr\,M(z)}{2} =\frac{c(z,a+\period)+s^\qd(z,a+\period)}{2},\quad z\in\C.
 \end{equation} 
 It follows readily from Lemma~\ref{lemEquQD} and Corollary~\ref{corWronskian} that the determinant of $M(z)$ equals one for all $z\in\C$.  
 The qualitative behavior of the real entire function $\Delta$ on the real line is captured by the following result. 
 
 \begin{lemma}\label{lemDelta}
 All zeros of $\Delta$ are real, non-zero and simple.
 Moreover, if $\dot{\Delta}(z)=0$ for some $z\in\R$, then $|\Delta(z)|\ge 1$ and $\Delta(z)\ddot{\Delta}(z)<0$ unless $\Delta$ is constant. 
 \end{lemma}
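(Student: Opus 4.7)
The approach is to combine Floquet solutions at unimodular multipliers with the integral identity~\eqref{eqnDEint} to establish the four assertions in turn. First, evaluating the fundamental system at $z=0$ via~\eqref{eqnCSatzero} gives $\Delta(0) = \cosh(\period/2) > 1$, so $0$ is not a zero of $\Delta$. For the reality of the remaining zeros, suppose $\Delta(z_0) = 0$ for some $z_0 \in \C$. Since $\det M(z_0) = 1$ and $\tr M(z_0) = 0$, the monodromy has eigenvalues $\pm\I$, and there is a non-trivial solution $f$ of~\eqref{eqnDEho} at $z_0$ with $f(\cdot+\period) = \I f(\cdot)$ and $f^\qd(\cdot+\period) = \I f^\qd(\cdot)$. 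Testing~\eqref{eqnDEint} over one period against $h = \bar f$, the boundary term $[f^\qd \bar f]_a^{a+\period}$ vanishes since the multiplier has modulus one, leaving
\[
A := \int_a^{a+\period}\bigl(|f'|^2 + \tfrac{1}{4}|f|^2\bigr)dx = z_0\,B + z_0^2\,C,
\]
with $B := \omega(|f|^2) \in \R$ and $C := \int_a^{a+\period}|f|^2\,d\dip \geq 0$. Since $A > 0$, taking imaginary parts under $\im z_0 \neq 0$ forces $B = -2(\re z_0)\,C$, and substituting back collapses the right-hand side to $-|z_0|^2 C \leq 0$, a contradiction.

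For the critical-point bound, I would first derive the variational identity $\dot M(z) = M(z)\,J(z)$, where $J = J_0\,G$ with $J_0$ the standard symplectic form and $G$ the symmetric matrix having entries $I_{fg}(z) := \omega(fg) + 2z\int_a^{a+\period} fg\,d\dip$ for $f, g \in \{c(z,\cdot), s(z,\cdot)\}$; this follows by applying~\eqref{eqnDEint} to pairs of solutions at parameters $z$ and $w$, forming the Green-type difference, dividing by $z - w$, and letting $w \to z$. Now suppose $\dot\Delta(z_0) = 0$ for some real $z_0$ with $|\Delta(z_0)| < 1$. Then $M(z_0)$ has distinct unimodular eigenvalues $\mu_\pm = \E^{\pm\I\theta_0}$ with complex-conjugate Floquet solutions $f_- = \overline{f_+}$. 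First-order spectral perturbation yields $\dot\mu_\pm/\mu_\pm = \langle v_\mp, G v_\pm\rangle/\langle v_\mp, J_0 v_\pm\rangle$; combined with $\dot\mu_+ + \dot\mu_- = 2\dot\Delta(z_0) = 0$, $\mu_+\mu_- \equiv 1$ and $\mu_+ \neq \mu_-$, this forces $\langle v_-, G v_+\rangle = \omega(|f_+|^2) + 2z_0\!\int|f_+|^2\,d\dip = 0$. On the other hand, applying~\eqref{eqnDEint} to $(f_+, \bar f_+)$ over one period (boundary term again vanishing since $|\mu_+| = 1$) yields $0 < \int(|f_+'|^2 + \tfrac{1}{4}|f_+|^2) = z_0\,\omega(|f_+|^2) + z_0^2\!\int|f_+|^2\,d\dip$, and substituting the vanishing linear constraint collapses the right-hand side to $-z_0^2\!\int|f_+|^2\,d\dip \leq 0$ (note $z_0 \neq 0$ since $\Delta(0) > 1$), a contradiction. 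Hence $|\Delta(z_0)| \geq 1$, and simplicity of the zeros of $\Delta$ follows automatically, since any zero has $|\Delta| = 0 < 1$ and therefore cannot be a critical point.

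The remaining strict inequality $\Delta(z_0)\ddot\Delta(z_0) < 0$ under $\dot\Delta(z_0) = 0$ (unless $\Delta$ is constant) is the hardest step. My strategy is to iterate the perturbation theory: differentiating $\dot M = MJ$ once more, and using $J^2 = -(\det G)\,I$ combined with the conservation $\mu_+\mu_- \equiv 1$, yields the identity $2\ddot\Delta(z_0) = \ddot\mu_+(z_0)\,(\mu_+ - \mu_-)/\mu_+$ whenever $\mu_\pm$ are distinct. Second-order spectral perturbation then represents $\ddot\mu_+(z_0)$ in terms of $G$, $\dot G$ and the Floquet solutions $f_\pm$, together with the boundary contribution $(\mu_\pm^2 - 1)f_\pm^\qd(a)f_\pm(a)$ that enters through~\eqref{eqnDEint} applied to $f_\pm$ paired with itself. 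I expect the main obstacle to lie precisely here: because the bilinear form $v \mapsto v^T G v$ is in general indefinite, extracting a definite sign for $\ddot\mu_+$ demands careful exploitation of the already-established vanishing $\langle v_-, G v_+\rangle = 0$ at $z_0$, the symplectic normalization $\det M \equiv 1$, and the strict positivity $\int(|f_+'|^2 + \tfrac{1}{4}|f_+|^2) > 0$. The degenerate coexistence case $|\Delta(z_0)| = 1$, where $\mu_+$ and $\mu_-$ collide, would need to be handled separately via the Jordan structure of $M(z_0)$ and the fact that a non-constant real entire function of Cartwright class cannot vanish to infinite order at a real point.
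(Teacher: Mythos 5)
Your treatment of the first two assertions is sound and is, at its core, the paper's own argument: reality (and non-vanishing, via $\Delta(0)=\cosh(\period/2)>1$ from~\eqref{eqnCSatzero}) comes from a Floquet solution with multiplier $\I$ fed into~\eqref{eqnDEint}, and the bound $|\Delta(z)|\geq 1$ at real critical points comes from showing that $\dot{\Delta}(z)=0$ with $|\Delta(z)|<1$ forces $\omega(|f_+|^2)+2z\int_a^{a+\period}|f_+|^2\,d\dip=0$ for a unimodular Floquet solution $f_+$, which contradicts the strictly positive quadratic form exactly as in the paper (there via~\eqref{eqnDeltadotpsi}). Your packaging through $\dot{M}=MJ_0G$ and first-order eigenvalue perturbation is a legitimate alternative to the paper's explicit formulas~\eqref{eqnDeltadot}--\eqref{eqnDeltadotpsi} (and it avoids the paper's separate case $s(z,a+\period)=0$), but note that the identity $\dot{M}=MJ_0G$ is only asserted in your writeup; deriving it requires essentially the same differentiation-and-integration-by-parts computation the paper carries out, so nothing is saved there, and it must actually be done.

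The genuine gap is the third assertion, $\Delta(z)\ddot{\Delta}(z)<0$ at critical points. You do not prove it: your second-order perturbation plan is explicitly conjectural (you concede the sign extraction from the indefinite form $G$ is unresolved and that the coexistence case $|\Delta(z)|=1$, where $\mu_\pm$ collide and the first-order formulas break down, is deferred), and as it stands it would not compile into an argument. Moreover, this heavy machinery is unnecessary: once you know --- as you do at this point --- that $\Delta$ is a non-constant real entire function of Cartwright class (Proposition~\ref{propcsent}) with only real and simple zeros, the claim is a soft classical fact, which is precisely how the paper concludes. Concretely, such a function lies in the Laguerre--P\'olya class, so its Hadamard factorization gives, away from its zeros, $\bigl(\dot{\Delta}/\Delta\bigr)'(x)=-\sum_k (x-\lambda_k)^{-2}<0$ on $\R$ provided $\Delta$ has at least one zero (a zero-free non-constant candidate would be $c\,\E^{\beta z}$ with $\beta\in\R\setminus\{0\}$, which is not of Cartwright class); evaluating at a critical point $z$, which cannot be a zero, yields $\ddot{\Delta}(z)/\Delta(z)<0$. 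Replacing your speculative second-order analysis by this observation closes the proof; without it, the statement is only partially established.
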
 
 
 \begin{proof}
   Let us suppose that $z\in\C$ is a zero of the Floquet discriminant $\Delta$.
   Since the determinant of $M(z)$ equals one, this implies that $\I$ is an eigenvalue of the matrix $M(z)$. 
   Thus there is a nontrivial solution $f$ of the differential equation~\eqref{eqnDEho} such that $f(a+\period) = \I f(a)$ and $f^\qd(a+\period) = \I f^\qd(a)$. 
   Setting $h=z^\ast f^\ast$, $x=a$ and $y=a+\period$ in~\eqref{eqnDEint} before taking the imaginary part then gives 
   \begin{align}\label{eqnfSA}
       \biggl( \int_a^{a+\period} |f'(\xi)|^2 d\xi +  \frac{1}{4}  \int_a^{a+\period} |f(\xi)|^2d\xi + \int_a^{a+\period} |z f|^2 d\dip\biggr) \im\,z =0.
   \end{align}
   As the expression in the brackets is positive, we may conclude that $z$ is real. 
   
    In order to compute the derivative of the Floquet discriminant $\Delta$, we first introduce the solutions $c_\period(z,\redot)$, $s_\period(z,\redot)$ of the differential equation~\eqref{eqnDEho} given by 
    \begin{align}\begin{split}\label{eqncsper}
      c_\period(z,x) & = s^\qd(z,a+\period)c(z,x) - c^\qd(z,a+\period)s(z,x), \\ s_\period(z,x) & = -s(z,a+\period)c(z,x) + c(z,a+\period)s(z,x), 
    \end{split}\end{align}
    for every $x\in\R$ and $z\in\C$, so that they satisfy the initial conditions  
   \begin{align*}
     c_\period(z,a+\period) & = s_\period^\qd(z,a+\period) = 1, &  c_\period^\qd(z,a+\period) & = s_\period(z,a+\period)=0,
   \end{align*}
   at the point $a+\period$.
   Upon choosing $f=c(z,\redot)$ in~\eqref{eqnDEfqd} and replacing $f'$ using the quasi-derivative, we differentiate with respect to $z$ to obtain
\begin{align*}
  \left. \dot{c}^\qd(z,\redot)\right|_x^y  & = \frac{1}{4} \int_x^y \dot{c}(z,\xi)d\xi -   z\int_x^y u(\xi)\dot{c}(z,\xi) + u'(\xi)\dot{c}^\qd(z,\xi)\, d\xi  \\ 
   & \qquad - \int_x^y u(\xi) c(z,\xi) + u'(\xi)c^\qd(z,\xi)\, d\xi - 2z \int_x^y u'(\xi)^2 c(z,\xi) d\xi \\
   & \qquad  - z^2 \int_x^y u'(\xi)^2 \dot{c}(z,\xi)d\xi - 2z\int_x^y c(z,\xi)d\dip(\xi) - z^2 \int_x^y \dot{c}(z,\xi)d\dip(\xi)
 \end{align*} 
for every $x$, $y\in\R$ and $z\in\C$. 
Employing the integration by parts formula~\eqref{eqnPI}, this gives the identity
   \begin{align*}
    \dot{c}(z,a+\period) & = \left. \dot{c}(z,\redot) s_\period^\qd(z,\redot) - \dot{c}^\qd(z,\redot) s_\period(z,\redot) \right|_a^{a+\period} =\phi_z(c(z,\redot)s_\period(z,\redot)), \quad z\in\C, 
   \end{align*} 
 after some calculations, where $\phi_z(f)$ is defined by  
\begin{align*}
\phi_z(f) = \int_a^{a+\period} u(x){f}(x) + u'(x){f}'(x)\, dx + 2z\int_a^{a+\period} f\, d\dip, \quad f\in H^1_\loc(\R).
\end{align*}
   Furthermore, in much the same manner, we also obtain
   \begin{align*}
    -\dot{s}^\qd(z,a+\period) & =  \phi_z(c_\period(z,\redot)s(z,\redot)), \quad z\in\C.
   \end{align*}   
   After plugging in~\eqref{eqncsper}, these two equations add up to
   \begin{align}\begin{split}\label{eqnDeltadot}
     \dot{\Delta}(z) & = \frac{c^\qd(z,a+\period)}{2} \phi_z(s(z,\redot)^2)  - \frac{s(z,a+\period)}{2} \phi_z(c(z,\redot)^2) \\
                              & \qquad\qquad + \frac{c(z,a+\period) - s^\qd(z,a+\period)}{2} \phi_z(c(z,\redot)s(z,\redot)), \quad z\in\C.
   \end{split}\end{align}
   Moreover, as long as $s(z,a+\period)$ is non-zero, we get 
   \begin{align}\label{eqnDeltadotpsi}
     - \frac{2\dot{\Delta}(z)}{s(z,a+\period)}  & =  \phi_z( \psi_-(z,\redot) \psi_+(z,\redot)), 
   \end{align} 
   where $\psi_-(z,\redot)$ and $\psi_+(z,\redot)$ are the (nontrivial) solutions of~\eqref{eqnDEho} given by
   \begin{align*}
    \psi_\pm(z,x) = c(z,x) + \frac{\Delta(z)  - c(z,a+\period)  \pm \sqrt{\Delta(z)^2-1}}{s(z,a+\period)} s(z,x), \quad x\in\R.
   \end{align*}    
   
   Now suppose that $\dot{\Delta}(z)=0$ for some $z\in\R$. 
   If $s(z,a+\period)$ is zero, then 
   \begin{align*}
     c(z,a+\period)s^\qd(z,a+\period) = \det M(z) = 1
    \end{align*}
     and therefore 
   \begin{align*}
    2 |\Delta(z)| =  \bigl| c(z,a+\period) + s^\qd(z,a+\period)\bigr| =  \bigl| c(z,a+\period) + c(z,a+\period)^{-1} \bigr| \geq 2.
   \end{align*}
   Otherwise $s(z,a+\period)$ is non-zero and thus the right-hand side of~\eqref{eqnDeltadotpsi} equals zero.
   Now if $|\Delta(z)|$ was less than one, then we would have $\psi_-(z,\redot)=\psi_+(z,\redot)^\ast$ and using~\eqref{eqnDEint}, the right-hand side of~\eqref{eqnDeltadotpsi} would turn into 
   \begin{align*}
                \frac{1}{z} \biggl(\int_a^{a+\period} \left|\psi_+'(z,x)\right|^2 dx + \frac{1}{4} \int_a^{a+\period} \left|\psi_+(z,x)\right|^2 dx +  \int_a^{a+\period} \left|z\psi_+(z,x)\right|^2 d\dip(x)\biggr)\not=0,
   \end{align*}   
   which constitutes a contradiction. 
   This shows that $|\Delta(z)|\geq 1$ in either case. 
   In particular, it guarantees that all zeros of $\Delta$ are simple and we are left to note that the last claim is true for any non-constant real entire function of Cartwright class with only real and simple zeros. 
 \end{proof}

Let us now consider the spectral problem associated with our differential equation~\eqref{eqnDEho} on the interval $[a,a+\period)$ with periodic/antiperiodic boundary conditions.
The corresponding periodic/antiperiodic spectrum $\sigma_{\pm}$ consist of all those $z\in\C$ for which there is a nontrivial solution $f$ of the differential equation~\eqref{eqnDEho} with 
\begin{align}\label{eqnPBC}
 \begin{pmatrix} f(a) \\ f^\qd(a) \end{pmatrix} = \pm \begin{pmatrix} f(a+\period) \\ f^\qd(a+\period) \end{pmatrix}. 
\end{align}
Under the multiplicity of a periodic/antiperiodic eigenvalue we understand the number of linearly independent solutions of~\eqref{eqnDEho} that satisfy~\eqref{eqnPBC}.

  \begin{proposition}\label{propperspec}
   The periodic/antiperiodic spectrum $\sigma_\pm$ is a discrete set of non\-zero reals and coincides with the set of zeros of the entire  function $\Delta\mp1$. 
   Each periodic/anti\-periodic eigenvalue's multiplicity is equal to its multiplicity as a zero of $\Delta\mp1$. 
  \end{proposition}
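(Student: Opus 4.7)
The plan is to recast the boundary conditions~\eqref{eqnPBC} as a spectral condition on the monodromy matrix $M(z)$ and then match algebraic and geometric multiplicities. By Corollary~\ref{corSolIVP} any solution of~\eqref{eqnDEho} is uniquely determined by the pair $(f(a), f^\qd(a))$, and~\eqref{eqnPBC} is equivalent to requiring that this pair is an eigenvector of $M(z)$ with eigenvalue $\varepsilon = \pm 1$. Since $\det M(z) = 1$, the characteristic polynomial of $M(z)$ is $\lambda^2 - 2\Delta(z)\lambda + 1$, so $\varepsilon$ is an eigenvalue of $M(z)$ if and only if $\Delta(z) = \varepsilon$; this identifies $\sigma_\pm$ set-theoretically with the zeros of $\Delta \mp 1$, and the multiplicity defined in the statement with the dimension of the $\varepsilon$-eigenspace of $M(z_0)$, which lies in $\{1, 2\}$.

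For discreteness, realness and non-vanishing, I would first use~\eqref{eqnCSatzero} to compute $\Delta(0) = \cosh(\period/2) > 1$, ruling zero out of $\sigma_\pm$ and showing that $\Delta \mp 1$ is not identically zero (hence has a discrete zero set, by Proposition~\ref{propcsent}). Realness follows by mimicking the opening argument in the proof of Lemma~\ref{lemDelta}: for any nontrivial solution $f$ satisfying~\eqref{eqnPBC}, I would substitute $h = z^\ast f^\ast$ into identity~\eqref{eqnDEint} over $[a, a+\period]$. The boundary term vanishes because of~\eqref{eqnPBC} combined with $\varepsilon^2 = 1$, and taking imaginary parts produces
\[
 \im z \, \biggl( \int_a^{a+\period} |f'|^2 + \tfrac{1}{4} |f|^2 \, dx + |z|^2 \int_a^{a+\period} |f|^2 \, d\dip \biggr) = 0,
\]
forcing $z \in \R$.

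For the multiplicity assertion, Lemma~\ref{lemDelta} implies that the order of $z_0$ as a zero of $\Delta \mp 1$ is $1$ when $\dot\Delta(z_0) \ne 0$ and exactly $2$ when $\dot\Delta(z_0) = 0$ (since then $\ddot\Delta(z_0) \ne 0$ by that same lemma). It therefore suffices to establish the equivalence that $M(z_0) = \varepsilon I$ if and only if $\dot\Delta(z_0) = 0$. The forward direction is immediate from formula~\eqref{eqnDeltadot}, as all three coefficient factors there vanish when $M(z_0) = \varepsilon I$. The main obstacle is the converse, which I would handle by splitting on whether $s(z_0, a+\period)$ vanishes. If $s(z_0, a+\period) \ne 0$, then identity~\eqref{eqnDeltadotpsi} expresses $\dot\Delta(z_0)$ in terms of $\phi_{z_0}(\psi^2)$ for the Floquet solution $\psi = \psi_\pm(z_0, \redot)$ (the two branches coincide because $\Delta(z_0)^2 = 1$), which is a real-valued periodic/antiperiodic eigenfunction; plugging $f = h = \psi$ into~\eqref{eqnDEint} over one period (the boundary term again vanishes by~\eqref{eqnPBC}) and simplifying yields the positivity identity
\[
 z_0 \, \phi_{z_0}(\psi^2) = \int_a^{a+\period} |\psi'|^2 + \tfrac{1}{4} \psi^2 \, dx + z_0^2 \int_a^{a+\period} \psi^2 \, d\dip > 0,
\]
so $\dot\Delta(z_0) \ne 0$. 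If instead $s(z_0, a+\period) = 0$, then $\det M(z_0) = 1$ together with $\Delta(z_0) = \varepsilon$ forces $c(z_0, a+\period) = s^\qd(z_0, a+\period) = \varepsilon$, making $s(z_0, \redot)$ itself a periodic/antiperiodic eigenfunction; formula~\eqref{eqnDeltadot} then collapses to $\dot\Delta(z_0) = \tfrac{1}{2} c^\qd(z_0, a+\period) \, \phi_{z_0}(s(z_0, \redot)^2)$, and the same positivity argument reduces $\dot\Delta(z_0) = 0$ to $c^\qd(z_0, a+\period) = 0$, i.e., to $M(z_0) = \varepsilon I$.
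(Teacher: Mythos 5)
Your proposal is correct and follows essentially the same route as the paper: identifying $\sigma_\pm$ with the zeros of $\Delta\mp1$ via the monodromy matrix, obtaining realness from~\eqref{eqnDEint} with $h=z^\ast f^\ast$, and settling the multiplicity claim through Lemma~\ref{lemDelta} together with~\eqref{eqnDeltadot} and~\eqref{eqnDeltadotpsi}, merely spelling out the positivity argument the paper leaves implicit. The only minor deviation is that you exclude $z=0$ by computing $\Delta(0)=\cosh(\period/2)>1$ from~\eqref{eqnCSatzero}, whereas the paper argues by contradiction with $h=f^\ast$ in~\eqref{eqnDEint}; both are fine.
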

  
  \begin{proof}
   Let $z\in\sigma_\pm$ be a periodic/antiperiodic eigenvalue with a corresponding eigenfunction $f$.
   If $z$  was zero, then setting $h= f^\ast$, $x=a$ and $y=a+\period$ in~\eqref{eqnDEint} would yield the contradiction
   \begin{align*}
     \int_a^{a+\period} |f'(\xi)|^2 d\xi + \frac{1}{4} \int_a^{a+\period} |f(\xi)|^2 d\xi = \left. f^\qd f^\ast\right|_a^{a+\period} = 0. 
   \end{align*} 
   Moreover, upon setting $h=z^\ast f^\ast$, $x=a$ and $y=a+\period$ in~\eqref{eqnDEint} and taking the imaginary part gives~\eqref{eqnfSA}, which shows that $z$ has to be real. 
   Furthermore, it is readily seen that some $z\in\C$ is a periodic/antiperiodic eigenvalue if and only if $\pm 1$ is an eigenvalue of the matrix $M(z)$, which is equivalent to $\Delta(z) = \pm1$. 
   In particular, this also shows that the spectrum $\sigma_\pm$ is a discrete set. 
   
   Since the zeros of $\Delta\mp1$ have multiplicity at most two by Lemma~\ref{lemDelta}, it remains to show that a periodic/antiperiodic eigenvalue $z\in\sigma_\pm$ is double if and only if $\dot{\Delta}(z)$ vanishes. 
   If the eigenvalue $z$ has multiplicity two, then $\pm M(z)$ is the identity matrix and~\eqref{eqnDeltadot} shows that $\dot{\Delta}(z)$ is zero. 
   Conversely, if we suppose that $\dot{\Delta}(z)$ vanishes, then $s(z,a+\period)$ has to be zero, since otherwise~\eqref{eqnDeltadotpsi} would give a contradiction. 
   It then follows readily that also $c(z,a+\period)=s^\qd(z,a+\period)=\pm1$ and consequently~\eqref{eqnDeltadot} implies that $c^\qd(z,a+\period)=0$ and thus $\pm M(z)$ is the identity matrix. 
  \end{proof}
     
 \begin{remark}\label{remBaseInd}
  It is not difficult to see that the periodic/antiperiodic spectrum (including multiplicities) is independent of the chosen base point $a$ and thus so is the Floquet discriminant.
\end{remark} 
 
 Let us now consider the zeros of the entire function $\Delta^2 - 1$, each of which is non-zero, real and has multiplicity at most two in view of Lemma~\ref{lemDelta}. 
 If there are only finitely many positive zeros, then their number (counted with multiplicities) is even since $\Delta^2-1$ is positive at zero as well as for large enough positive values. 
 Thus we may label them in non-decreasing order by 
 \begin{align}
  \lambda_1,\lambda_2,\ldots,\lambda_{2I_+ -1},\lambda_{2I_+}
 \end{align}
 for some non-negative integer $I_+$. 
 When there are infinitely many positive zeros, we set $I_+=\infty$ and label them in non-decreasing order by 
  \begin{align}
  \lambda_1,\lambda_2,\lambda_3,\lambda_4,\ldots
 \end{align}
 taking also into account multiplicities again. 
 In a similar way, if there are only finitely many negative zeros, then their number (counted with multiplicities) is even since $\Delta^2-1$ is positive at zero as well as for large enough negative values. 
 Thus we may label them in non-decreasing order by 
 \begin{align}
  \lambda_{-2I_-},\lambda_{-2I_-+1},\ldots,\lambda_{-2},\lambda_{-1}
 \end{align}
 for some non-negative integer $I_-$. 
 When there are infinitely many negative zeros, we set $I_-=\infty$ and label them in non-decreasing order by 
  \begin{align}
  \ldots,\lambda_{-4},\lambda_{-3},\lambda_{-2},\lambda_{-1}
 \end{align}
 taking also into account multiplicities again. 
 It follows from Lemma~\ref{lemDelta} that these sequences indeed satisfy the inequalities  
\begin{align}
   \cdots \leq \lambda_{-4} < \lambda_{-3} \le \lambda_{-2}  < \lambda_{-1} < 0  < \lambda_1 < \lambda_2 \leq  \lambda_3 < \lambda_4 \leq \cdots,
\end{align}
where the smallest (in modulus) positive and negative zero is a simple periodic eigenvalue, followed by alternating pairs (except for the last zero when there are only finitely many) of antiperiodic and periodic eigenvalues.  
 Upon introducing the index set $\inds = \lbrace i\in\Z\backslash\{0\}\,|\, -I_-\leq i\leq I_+\rbrace$, we define the intervals
 \begin{align}
 \Gamma_i=\begin{cases} [-\infty,\lambda_{2i}], & i= -I_-, \\
                                           [\lambda_{2i-1},\lambda_{2i}], & -I_- < i  <0, \\
                                            [\lambda_{2i},\lambda_{2i+1}], & 0<i < I_+, \\
                                             [\lambda_{2i},\infty], & i=I_+, \\  \end{cases} 
 \end{align}
 for each $i\in\inds$,  called {\em the gaps}. 
 A gap $\Gamma_i$ is called {\em closed} if it reduces to a single point and {\em open} otherwise. 
 If they exist, the last positive gap $\Gamma_{I_+}$ is called the {\em outermost positive gap} and the last negative gap $\Gamma_{-I_-}$ is called the {\em  outermost negative gap}. 
 The typical behavior of the Floquet discriminant $\Delta$ and the location of the gaps relative to it is depicted below:
  \begin{center}
 \begin{tikzpicture}[domain=-4:8, samples=101]

\draw[color=red, domain=-2.91:5.94] plot (\x,{ 1.4*(1-\x/1.6)*(1-\x/5.2)*(1+\x/2.2) }) node[right] {$\Delta$};

\draw[-] (-4,0) -- (7.1,0) node[above] {};
\draw[dashed, help lines] (-4,1) -- (7.1,1) node[right] {};
\draw[dashed, help lines] (-4,-1) -- (7.1,-1) node[right] {};
\draw[-] (0,-2) -- (0,2) node[right] {};

\draw[-] (-2.599,-0.1) -- (-2.599,0.1) node[above] {$\lambda_{-2}$};
\draw[-] (-1.599,0.1) -- (-1.599,-0.1) node[below] {$\lambda_{-1}$};
\draw[-] (0.582,0.1) -- (0.582,-0.1) node[below] {$\lambda_1$};
\draw[-] (2.659,-0.1) -- (2.659,0.1) node[above] {$\lambda_2$};
\draw[-] (4.54,-0.1) -- (4.54,0.1) node[above] {$\lambda_3$};
\draw[-] (5.616,0.1) -- (5.616,-0.1) node[below] {$\lambda_4$};

\draw[dotted, help lines] (-2.599,-1) -- (-2.599,-0.1) node[right] {};
\draw[dotted, help lines] (-1.599,0.1) -- (-1.599,1) node[right] {};
\draw[dotted, help lines] (0.582,0.1) -- (0.582,1) node[right] {};
\draw[dotted, help lines] (2.659,-1) -- (2.659,-0.1) node[right] {};
\draw[dotted, help lines] (4.54,-1) -- (4.54,-0.1) node[right] {};
\draw[dotted, help lines] (5.616,0.1) -- (5.616,1) node[right] {};

\draw[-, ultra thick] (-2.599,0) -- (-1.599,0) node[right] {};
\draw[-, ultra thick] (0.582,0) -- (2.659,0) node[right] {};
\draw[-, ultra thick] (4.54,0) -- (5.616,0) node[right] {};

\draw[-, color=white, ultra thick] (-3.8,0) -- (-3.1,0) node[above] {};
\draw[-, color=white, ultra thick] (3.32,0) -- (3.8,0) node[above] {};
\draw[-, color=white, ultra thick] (6.35,0) -- (6.9,0) node[above] {};
\node at (-3.42,-0.009) {$\Gamma_{-1}$}; 
\node at (3.6,0) {$\Gamma_{1}$};
\node at (6.66,0) {$\Gamma_{2}$};

\node[color=red] at (0,1.4) {\tiny $\bullet$};
\node at (0.7,1.6) {\tiny $\cosh(\period/2)$};

\end{tikzpicture}
\end{center}

We will next turn to the spectral problem associated with our differential equation~\eqref{eqnDEho} on the interval $[a,a+\period)$ with Dirichlet boundary conditions at the endpoints.
The corresponding spectrum $\sigma$ consists of all those $z\in\C$ for which there is a nontrivial solution $f$ of the differential equation~\eqref{eqnDEho} with $f(a)=f(a+\period)=0$.
From unique solvability of initial value problems for our differential equation in Corollary~\ref{corSolIVP}, we see that such a solution is always unique up to scalar multiples. 
 
\begin{proposition}\label{prop:s}
 The Dirichlet spectrum $\sigma$ is a discrete set of nonzero reals and coincides with the set of zeros of the entire function $s(\ledot,a+\period)$, all of which are simple.  
\end{proposition}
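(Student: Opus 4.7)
My plan is to reduce the statement to an immediate consequence of the fundamental-system structure and the properties of $s(\ledot,a+\period)$ already recorded in Proposition~\ref{propcsent}, supplementing with a short weak-formulation argument for the reality and non-vanishing of eigenvalues in case one prefers not to invoke the Cartwright-class part of that proposition.

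The first step is to identify $\sigma$ with the zero set of $s(\ledot,a+\period)$. If $f$ is a nontrivial solution of~\eqref{eqnDEho} with $f(a)=0$, then by the uniqueness part of Corollary~\ref{corSolIVP} the pair $(f,f^\qd)$ is determined by $f^\qd(a)$, and comparing with the initial conditions~\eqref{eqnFSentIC} gives $f=f^\qd(a)\,s(z,\ledot)$; non-triviality of $f$ forces $f^\qd(a)\neq 0$, so $f(a+\period)=0$ is equivalent to $s(z,a+\period)=0$. Hence $\sigma$ coincides with the zeros of the entire function $z\mapsto s(z,a+\period)$, which by Proposition~\ref{propcsent} (applied at $x=a+\period\neq a$) are non-zero, real and simple and form a discrete set. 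This already yields all the claims.

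If one wishes to verify reality and non-vanishing of $\sigma$ without appealing to the Cartwright-class assertion, one can argue in the spirit of the proof of Proposition~\ref{propperspec}: for a Dirichlet eigenfunction $f$ at $z$, apply~\eqref{eqnDEint} on $[a,a+\period]$ with $h=z^\ast f^\ast$. Since $f(a)=f(a+\period)=0$, the boundary term $f^\qd h|_a^{a+\period}$ vanishes, and taking the imaginary part reproduces~\eqref{eqnfSA}, forcing $\im z=0$. Testing instead with $h=f^\ast$ at $z=0$ would yield
\begin{align*}
\int_a^{a+\period} |f'(\xi)|^2\,d\xi + \frac{1}{4}\int_a^{a+\period}|f(\xi)|^2\,d\xi = 0,
\end{align*}
which is impossible, so $0\notin\sigma$.

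The simplicity of the zeros of $s(\ledot,a+\period)$ is the only slightly non-routine point, but it is a direct consequence of Proposition~\ref{propcsent}; should one want a self-contained argument, differentiation of the distributional identity~\eqref{eqnDEfqd} in $z$ together with the Wronskian relation of Corollary~\ref{corWronskian} expresses $\dot{s}(z,a+\period)$ at a zero $z\in\sigma$ as a strictly signed quadratic form in the corresponding eigenfunction, excluding a double zero. I do not anticipate any genuine obstacle here, as everything reduces to the structural results of Section~\ref{secBDE} and Proposition~\ref{propcsent}.
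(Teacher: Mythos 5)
Your core argument is exactly the paper's proof: identify the Dirichlet eigenvalues with the zeros of $z\mapsto s(z,a+\period)$ via uniqueness of solutions to initial value problems, and then read off discreteness, reality, non-vanishing and simplicity from Proposition~\ref{propcsent} applied at $x=a+\period\neq a$. The additional self-contained arguments (the weak-formulation computation for reality and non-vanishing, and the sketched derivative formula for simplicity) are correct in spirit but redundant, since the paper, like you, simply invokes Proposition~\ref{propcsent} at this point.
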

 
\begin{proof}
 In view of Proposition~\ref{propcsent}, it suffices to notice that some $z\in\C$ is a Dirichlet eigenvalue if and only if $s(z,a+\period)$ vanishes. 
\end{proof}

The location of the Dirichlet spectrum relative to the periodic and antiperiodic spectrum can be described by the following result. 

\begin{lemma}
 Each Dirichlet eigenvalue belongs to one of the gaps.  
 Conversely, except for the outermost gaps, each gap contains exactly one Dirichlet eigenvalue and each outermost gap contains at most one Dirichlet eigenvalue. 
\end{lemma}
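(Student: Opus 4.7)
The plan is to handle the two directions separately, leveraging the identity $\det M(z)=1$ that follows from Lemma~\ref{lemEquQD} and Corollary~\ref{corWronskian}.

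For the first assertion, let $z_0$ be a Dirichlet eigenvalue. By Proposition~\ref{prop:s}, $z_0\in\R\setminus\{0\}$ and $s(z_0,a+\period)=0$, so $\det M(z_0)=1$ reduces to $c(z_0,a+\period)\,s^\qd(z_0,a+\period)=1$, a product of two nonzero reals. Since $|x+1/x|\geq 2$ for real $x\neq 0$, this yields $|\Delta(z_0)|\geq 1$, placing $z_0$ in the closure of some gap.

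For the converse, the key tool is the algebraic identity
\[
\Delta(z)^2-1=\tfrac{1}{4}\bigl(c(z,a+\period)-s^\qd(z,a+\period)\bigr)^2+s(z,a+\period)\,c^\qd(z,a+\period),
\]
which is again a rewriting of $\det M(z)=1$. Closed gaps are immediate: at a closed gap $\lambda_{2i}=\lambda_{2i+1}=\lambda$, the double zero of $\Delta^2-1$ forces $\dot\Delta(\lambda)=0$, and the argument used in the proof of Proposition~\ref{propperspec} shows $M(\lambda)=\pm I$; in particular $s(\lambda,a+\period)=0$, so $\lambda$ is itself the Dirichlet eigenvalue in the closed gap (uniqueness being automatic from Part~1). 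For an open gap, at each endpoint $\lambda$ the identity together with $\Delta(\lambda)^2=1$ gives $s(\lambda,a+\period)\,c^\qd(\lambda,a+\period)\leq 0$, with equality only when $c(\lambda,a+\period)=s^\qd(\lambda,a+\period)$, in which case $\det M(\lambda)=1$ forces $s(\lambda,a+\period)=0$ and $\lambda$ itself is the Dirichlet eigenvalue at the endpoint.

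Combining these sign constraints with simplicity of the zeros of $s(\ledot,a+\period)$ from Proposition~\ref{prop:s} and Part~1, the sign of $s(\ledot,a+\period)$ must change across every internal open gap, yielding at least one Dirichlet eigenvalue there. For the sharp count---at most one per internal gap and at most one per outermost gap---my plan is a Hadamard-factorization comparison: by Proposition~\ref{propcsent} and Remark~\ref{remET}, both $s(\ledot,a+\period)$ and $\Delta^2-1$ are real entire of Cartwright class, with exponential types in the ratio $1{:}2$, so the zero-counting function of $s(\ledot,a+\period)$ is asymptotically half that of $\Delta^2-1$. Since every zero of $s(\ledot,a+\period)$ already lies in one of the gaps by Part~1, and existence of at least one zero per internal gap has just been established, this density matching pins down the one-per-gap count for internal gaps and at most one for each outermost gap. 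The main obstacle is precisely this uniqueness step: purely local sign analysis at the endpoints does not forbid multiple Dirichlet eigenvalues deep inside a single gap, so the Cartwright density comparison is what ultimately closes the proof.
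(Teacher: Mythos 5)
There are genuine gaps at several points of your outline, and the step you yourself flag as the main obstacle is not actually overcome. First, in Part~1 the inequality $|\Delta(z_0)|\geq 1$ does \emph{not} place $z_0$ in a gap: the set where $|\Delta|\geq 1$ also contains the central interval $(\lambda_{-1},\lambda_1)$ around $z=0$ (there $\Delta>1$, since $\Delta(0)=\cosh(\period/2)>1$), and this interval is not contained in any $\Gamma_i$. Your computation does not exclude Dirichlet eigenvalues from $(\lambda_{-1},0)\cup(0,\lambda_1)$. Second, the claim that $s(\ledot,a+\period)$ must change sign across every internal open gap does not follow from the endpoint inequality $s(\lambda,a+\period)\,c^\qd(\lambda,a+\period)\leq 0$; assuming $s(\ledot,a+\period)$ keeps a fixed sign on a gap leads to no contradiction from what you wrote. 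Third, and decisively, the Cartwright density comparison cannot yield a per-gap count: zero-counting asymptotics are insensitive to relocating eigenvalues between gaps, so a $1{:}2$ ratio of types is perfectly compatible with two Dirichlet eigenvalues in one gap and none in a neighbouring one. Worse, when $\dip$ has no absolutely continuous part (for instance $\dip\equiv 0$, the multi-peakon case), the exponential types in Remark~\ref{remET} are all zero, the density statement is vacuous, and the spectra may even be finite; also the asserted $1{:}2$ ratio for $\Delta^2-1$ versus $s(\ledot,a+\period)$ is nowhere established (the type of a sum such as $c(\ledot,a+\period)+s^\qd(\ledot,a+\period)$ can drop).

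The paper proves the lemma by a single global argument that settles all three points at once: for suitable solutions $\theta_\pm(z,\redot)$ one shows via \eqref{eqnDEint} that
\begin{align*}
 M_\pm(z) = -\frac{2\Delta(z)\mp 2}{z\,s(z,a+\period)}, \quad z\in\C\backslash\R,
\end{align*}
is a Herglotz--Nevanlinna function; the interlacing of its poles (at $z=0$ and at the Dirichlet eigenvalues) with its zeros (the periodic, respectively antiperiodic, eigenvalues) excludes Dirichlet eigenvalues from $(\lambda_{-1},\lambda_1)$, forces exactly one in each internal gap and allows at most one in each outermost gap. Your first step (the identity $\Delta^2-1=\tfrac14(c-s^\qd)^2+s\,c^\qd$ at a zero of $s(\ledot,a+\period)$) is fine as far as it goes, but to repair the proposal you would need to replace the density step by an interlacing or monotonicity argument of this Herglotz type; a purely asymptotic counting argument cannot localize eigenvalues to individual gaps.
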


\begin{proof}
 For every $z\in\C\backslash\R$, we consider the solution $\theta_\pm(z,\redot)$ of~\eqref{eqnDEho} given by 
 \begin{align*}
  \theta_\pm(z,x) = \frac{s(z,a+\period)c(z,x) - (c(z,a+\period)\mp1)s(z,x)}{zs(z,a+\period)}, \quad x\in\R,
 \end{align*}
 and then define the analytic function $M_\pm$ on $\C\backslash\R$ by  
 \begin{align*}
   M_\pm(z) = \theta_\pm^\qd(z,a)z^\ast\theta_\pm(z,a)^\ast - \theta_\pm^\qd(z,a+\period)z^\ast\theta_\pm(z,a+\period)^\ast, \quad z\in\C\backslash\R.
 \end{align*}
 Setting $f=\theta_\pm(z,\redot)$, $h=z^\ast\theta_\pm(z,\redot)^\ast$, $x=a$ and $y=a+\period$ in~\eqref{eqnDEint}, we obtain 
 \begin{align*}
  \frac{M_\pm(z)-M_\pm(z)^\ast}{z-z^\ast}  & = \int_a^{a+\period} |\theta_\pm'(z,\xi)|^2 d\xi +  \frac{1}{4} \int_a^{a+\period} |\theta_\pm(z,\xi)|^2 d\xi \\
                        & \qquad\qquad\qquad\qquad\quad  + \int_a^{a+\period} |z\theta_\pm(z,\xi)|^2 d\dip(\xi), \quad z\in\C\backslash\R.
 \end{align*}
 Thus the function $M_\pm$ is a Herglotz--Nevanlinna function and upon noting that 
 \begin{align*}
   M_\pm(z) = -\frac{2\Delta(z)\mp2}{zs(z,a+\period)}, \quad z\in\C\backslash\R,
 \end{align*}
 the claims follow from the entailing interlacing property of zeros and poles.
\end{proof}

Let us finally define a strictly increasing sequence $\kappa_i\in\Gamma_i$, indexed by $i\in\inds$, in the following way: 
For every $i\in\inds$ such that there is a Dirichlet eigenvalue in the gap $\Gamma_i$, we define $\kappa_i$ to be this (uniquely determined) eigenvalue.
If there is no Dirichlet eigenvalue in the gap $\Gamma_i$, then we define $\kappa_i$ to be $-\infty$ if $\Gamma_i$ is the outermost negative gap and $\kappa_i$ to be $\infty$ if $\Gamma_i$ is the outermost positive gap.
Clearly, the Dirichlet spectrum $\sigma$ consists precisely of all those $\kappa_i$ that are finite.

 \section{Trace formulas}\label{secTF}

 We are now going to collect some trace formulas, which provide relations between the pair $(u,\mu)$ and the periodic/antiperiodic spectrum as well as the Dirichlet spectrum. 
 To this end, let us first enumerate the non-decreasing sequence of periodic/antiperiodic eigenvalues (including multiplicities) as  $\lambda_{i}^\pm$ with index $i\in\inds$ in such a way that $\lambda_i^\pm$ has the same sign as $i$.  
  By means of the Cartwright--Levinson theorem \cite[Lecture~17]{le96}, we thus have the product representation 
   \begin{align}\label{eqnDeltaprod}
   \Delta(z) \mp 1 = (\cosh(\period/2) \mp 1) \cprod_{i\in\inds} \biggl(1-\frac{z}{\lambda_{i}^\pm}\biggr),    \quad z\in\C,
  \end{align}   
 for the entire function $\Delta\mp 1$.
 This allows us to derive trace formulas, which will reappear as conserved quantities for the periodic conservative Camassa--Holm flow.
 
  \begin{proposition}\label{proptraceformulas}
   The first two trace formulas are
     \begin{align}
    \label{eqnTF1}\csum_{i\in\inds}\frac{1}{\lambda_{i}^\pm} & = \frac{\sinh(\period/2)}{\cosh(\period/2) \mp 1} \int_a^{a+\period} u(x) dx, \\ 
    \label{eqnTF2} \sum_{i\in\inds} \frac{1}{(\lambda_{i}^{\pm})^2} & = \frac{\pm 1}{\cosh(\period/2) \mp 1} \biggl(\int_a^{a+\period} u(x)dx\biggr)^2 
    + \frac{2\sinh(\period/2)}{\cosh(\period/2) \mp 1} \int_a^{a+\period} d\mu. 
 \end{align}
 \end{proposition}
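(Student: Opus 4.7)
The plan is to use the Cartwright--Levinson product representation~\eqref{eqnDeltaprod} in combination with the general formulas~\eqref{eqncprodatzero} expressing sums over $\eta_i = 1/\lambda_i^\pm$ in terms of the derivatives at zero of the associated entire function. Concretely, upon setting
\[
 \Lambda_\pm(z) = \frac{\Delta(z)\mp 1}{\cosh(\period/2)\mp 1},
\]
the representation~\eqref{eqnDeltaprod} identifies $\Lambda_\pm$ with $\cprod_{i\in\inds}(1-z/\lambda_i^\pm)$, so that~\eqref{eqncprodatzero} gives
\[
 \csum_{i\in\inds}\frac{1}{\lambda_i^\pm} = -\dot{\Lambda}_\pm(0), \qquad \sum_{i\in\inds}\frac{1}{(\lambda_i^\pm)^2} = \dot{\Lambda}_\pm(0)^2-\ddot{\Lambda}_\pm(0).
\]
The task thus reduces to evaluating $\dot{\Delta}(0)$ and $\ddot{\Delta}(0)$, which by definition of $\Delta$ amounts to computing $\dot{c}(0,a+\period)$, $\dot{s}^\qd(0,a+\period)$, $\ddot{c}(0,a+\period)$, $\ddot{s}^\qd(0,a+\period)$ via Proposition~\ref{propcsent}.

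For the first derivatives, I plug $x=a+\period$ into~\eqref{eqndc0} and~\eqref{eqndsp0}. The periodicity $u(a+\period)=u(a)$ kills the terms involving $u(x)-u(a)$, leaving
\[
 \dot{c}(0,a+\period) = \dot{s}^\qd(0,a+\period) = -\sinh(\period/2)\int_a^{a+\period} u(\xi)\, d\xi,
\]
hence $\dot{\Delta}(0) = -\sinh(\period/2)\int_a^{a+\period} u(\xi)\, d\xi$. Dividing by $\cosh(\period/2)\mp 1$ immediately yields~\eqref{eqnTF1}.

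For the second formula, I evaluate~\eqref{eqnddc0} and~\eqref{eqnddsp0} at $x=a+\period$. Again periodicity eliminates every occurrence of $(u(x)-u(a))^2$ and of the cross term $(u(x)-u(a))\int_a^x u$. With the shorthand $U=\int_a^{a+\period}u(\xi)\,d\xi$, both $\ddot{c}(0,a+\period)$ and $\ddot{s}^\qd(0,a+\period)$ retain the terms $\cosh(\period/2)\,U^2$ and $-2\sinh(\period/2)\int_a^{a+\period}d\mu$, while the $\sinh(\xi-a-\period/2)$-integrals (both against $u^2 d\xi$ and against $d\mu$) appear with opposite signs in the two expressions and therefore cancel after summation. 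This leaves
\[
 \ddot{\Delta}(0) = \cosh(\period/2)\, U^2 - 2\sinh(\period/2)\int_a^{a+\period} d\mu.
\]

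Substituting into $\dot{\Lambda}_\pm(0)^2-\ddot{\Lambda}_\pm(0)$ produces the expression
\[
 \frac{\sinh^2(\period/2)\, U^2 - (\cosh(\period/2)\mp 1)\cosh(\period/2)\, U^2}{(\cosh(\period/2)\mp 1)^2} + \frac{2\sinh(\period/2)}{\cosh(\period/2)\mp 1}\int_a^{a+\period} d\mu.
\]
The numerator of the first fraction simplifies via $\sinh^2(\period/2) - \cosh^2(\period/2) = -1$ to $\pm(\cosh(\period/2)\mp 1)\, U^2$, giving exactly~\eqref{eqnTF2}. The main bookkeeping obstacle is tracking the signs and confirming that all integral contributions from $u'^2$ (encoded through~\eqref{eqnDefdip} inside $d\mu$ versus $d\dip$) behave correctly under the cancellation; once this is arranged by working with $d\mu$ throughout, the remainder is the algebraic identity above.
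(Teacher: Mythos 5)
Your proposal is correct and follows the same route as the paper: compute $\dot{\Delta}(0)$ and $\ddot{\Delta}(0)$ from the identities in Proposition~\ref{propcsent} evaluated at $x=a+\period$ (where periodicity kills the $u(x)-u(a)$ terms and the $\sinh(\xi-\tfrac{x+a}{2})$-integrals cancel in the sum), then apply~\eqref{eqncprodatzero} to the product representation~\eqref{eqnDeltaprod}. Your explicit evaluation and the final algebraic simplification using $\sinh^2(\period/2)-\cosh^2(\period/2)=-1$ are exactly what the paper's ``readily obtain'' compresses, and your closing worry about $d\mu$ versus $d\dip$ is moot since Proposition~\ref{propcsent} is already stated in terms of $d\mu$.
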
 
 
 \begin{proof}
  By using the identities in Proposition~\ref{propcsent}, we readily obtain 
  \begin{align}
   \label{eqndDel0}\dot{\Delta}(0) & =  -\sinh(\period/2) \int_a^{a+\period} u(x) dx, \\
   \label{eqnddDel0}\ddot{\Delta}(0) & = \cosh(\period/2) \biggl(\int_a^{a+\period} u(x)dx\biggr)^2 - 2 \sinh(\period/2) \int_a^{a+\period} d\mu.
  \end{align}
  Now the claim follows from the formulas~\eqref{eqncprodatzero} applied to the product~\eqref{eqnDeltaprod}. 
 \end{proof}
 
Alternatively, we can add up the individual trace formulas for the periodic eigenvalues and for the antiperiodic eigenvalues to get the identities
    \begin{align}
     \label{eqnTF1c}\csum_{i\in\inds}\frac{1}{\lambda_{i}^+} + \csum_{i\in\inds}\frac{1}{\lambda_{i}^-} & = 2 \coth(\period/2)\int_a^{a+\period} u(x) dx, \\
     \label{eqnTF2c} \sum_{i\in\inds} \frac{1}{(\lambda_{i}^{+})^2} + \sum_{i\in\inds} \frac{1}{(\lambda_{i}^-)^2} & = \frac{2}{\sinh^2(\period/2)} \biggl(\int_a^{a+\period} u(x)dx\biggr)^2 + 4\coth(\period/2) \int_a^{a+\period} d\mu. 
 \end{align} 
Apart from this, the second trace formula~\eqref{eqnTF2} immediately provides a lower estimate for the moduli of the periodic/antiperiodic eigenvalues. 
      
   \begin{corollary}\label{corLB2}
    For every $\lambda\in\sigma_\pm$ we have the bound
\begin{align}\label{eqnLB2}
\frac{1}{\lambda^2} \leq \frac{\pm 1}{\cosh(\period/2) \mp 1} \biggl(\int_a^{a+\period} u(x)dx\biggr)^2 
    + \frac{2\sinh(\period/2)}{\cosh(\period/2) \mp 1} \int_a^{a+\period} d\mu.
\end{align}
   \end{corollary}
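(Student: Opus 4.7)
The plan is a one-line reduction to the second trace formula~\eqref{eqnTF2} of Proposition~\ref{proptraceformulas}. Every $\lambda\in\sigma_\pm$ appears in the non-decreasing enumeration $\{\lambda_i^\pm\}_{i\in\inds}$ introduced there (with multiplicity), so $1/\lambda^2$ coincides with some summand $1/(\lambda_{i_0}^\pm)^2$ of the series on the left-hand side of~\eqref{eqnTF2}. Since every $\lambda_i^\pm$ is a non-zero real number by Proposition~\ref{propperspec}, the series consists of strictly positive terms, and the single summand $1/(\lambda_{i_0}^\pm)^2$ is therefore bounded above by the entire sum. Substituting the closed-form expression from~\eqref{eqnTF2} for that sum yields the desired bound~\eqref{eqnLB2}.

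There is essentially no obstacle: all the work is already contained in Proposition~\ref{proptraceformulas}. The only sanity check worth making is that the right-hand side of~\eqref{eqnLB2} is genuinely non-negative, which is automatic because it equals a sum of squares of real numbers; and that an eigenvalue $\lambda$ of multiplicity two, contributing $2/\lambda^2$ to the series, still satisfies the weaker one-term bound $1/\lambda^2 \le 2/\lambda^2 \le \sum_i 1/(\lambda_i^\pm)^2$, so the statement remains valid uniformly across the spectrum.
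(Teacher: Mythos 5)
Your argument is exactly the paper's: the corollary is stated as an immediate consequence of the second trace formula~\eqref{eqnTF2}, obtained by bounding the single positive term $1/\lambda^2$ by the full sum $\sum_{i\in\inds} 1/(\lambda_i^\pm)^2$ and substituting its closed-form value. This is correct and matches the intended (one-line) proof.
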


 Under the additional restriction that the periodic/antiperiodic spectrum is positive, the first trace formula~\eqref{eqnTF1} yields another lower estimate that coincides with the one found in \cite[Theorem 4.1]{chmezh18} using a different approach.
      
   \begin{corollary}\label{cor:minlam}
    Suppose that the periodic/antiperiodic spectrum is positive. 
    Then for every $\lambda\in\sigma_\pm$ we have the bound 
 \begin{align}
     \frac{1}{\lambda} & \leq  \frac{\sinh(\period/2)}{\cosh(\period/2) \mp 1} \int_a^{a+\period} u(x)dx. 
 \end{align}
   \end{corollary}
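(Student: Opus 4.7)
The plan is to bound the single reciprocal eigenvalue $1/\lambda$ by the full sum $\csum_{i\in\inds} 1/\lambda_i^\pm$ that appears on the left-hand side of the first trace formula~\eqref{eqnTF1}, exploiting that positivity of the spectrum turns this sum into a series of positive terms.

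First I would observe that under the hypothesis $\sigma_\pm\subset(0,\infty)$, the enumeration convention (which dictates that $\lambda_i^\pm$ has the same sign as $i$) forces the index set to consist only of positive indices, so every term $1/\lambda_i^\pm$ appearing in~\eqref{eqnTF1} is strictly positive. For a sequence of positive reals, the conditional sum defined in~\eqref{eqncsumdef} coincides with the ordinary (possibly $+\infty$-valued) sum, since the truncated partial sums are monotone non-decreasing as $\varepsilon\downarrow 0$. The right-hand side of~\eqref{eqnTF1} is finite, so in fact $\sum_{i\in\inds} 1/\lambda_i^\pm <\infty$ and the series converges absolutely.

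Next, for any fixed eigenvalue $\lambda\in\sigma_\pm$, there exists $i_0\in\inds$ with $\lambda = \lambda_{i_0}^\pm$. Dropping all other (non-negative) terms from the sum yields
\begin{align*}
\frac{1}{\lambda} \;\leq\; \sum_{i\in\inds} \frac{1}{\lambda_i^\pm} \;=\; \csum_{i\in\inds} \frac{1}{\lambda_i^\pm},
\end{align*}
and the first trace formula~\eqref{eqnTF1} identifies the right-hand side with $\frac{\sinh(\period/2)}{\cosh(\period/2)\mp 1}\int_a^{a+\period} u(x)\,dx$, giving the claimed bound.

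There is no real obstacle here beyond checking that the conditional sum collapses to the ordinary sum of positive terms; once that is justified the estimate is immediate from~\eqref{eqnTF1}. The only minor point worth spelling out is that if an eigenvalue $\lambda$ has geometric multiplicity two (i.e.\ appears twice in the enumeration), then the bound still holds, since one copy of $1/\lambda$ on its own is trivially at most the total sum.
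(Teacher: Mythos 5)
Your argument is correct and is exactly the route the paper intends: since positivity of $\sigma_\pm$ makes every term $1/\lambda_i^\pm$ positive, the conditional sum in~\eqref{eqnTF1} is an ordinary convergent sum of positive terms, and discarding all but the term corresponding to $\lambda$ gives the bound. The paper presents the corollary as an immediate consequence of~\eqref{eqnTF1} without further proof, so your write-up just makes explicit the same observation.
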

   
   One should note that the estimates  in Corollary~\ref{corLB2} and Corollary~\ref{cor:minlam} are sharp.
  In fact, equality holds for some $\lambda\in\sigma_\pm$ if and only if the periodic/antiperiodic spectrum consists precisely of one eigenvalue.
 That this case indeed occurs can be seen from \cite[Theorem~5.4]{InvPeriodMP} for example.

  In a similar manner as before, we will now exploit the product representation\footnote{We employ the convention that a fraction with $\pm\infty$ in the denominator is regarded to be zero.}
 \begin{align}\label{eqnF}
  s(z,a+\period) = 2 \sinh(\period/2) \cprod_{i\in\inds} \biggl( 1-\frac{z}{\kappa_i}\biggr), \quad z\in\C,
 \end{align}
 to derive trace formulas for the Dirichlet spectrum. 
 
 \begin{proposition}\label{propTID}
  We have the identities 
   \begin{align}
       \label{eqnTFD1}\csum_{i\in\inds} \frac{1}{\kappa_{i}} & =  \coth(\period/2)\int_a^{a+\period} u(x) dx -2u(a), \\
    \label{eqnTFD2}\sum_{i\in\inds} \frac{1}{\kappa_{i}^2} & =  \frac{1}{\sinh^2(\period/2)} \biggl(\int_a^{a+\period} u(x)dx\biggr)^2 + 2\coth(\period/2) \int_a^{a+\period} d\mu - 8P(a), 
\end{align}
where $P$ is the $\period$-periodic function given by 
  \begin{align}
  P(x) = \frac{1}{4}  \int_\R \E^{-|x-\xi|} u(\xi)^2 d\xi + \frac{1}{4} \int_\R \E^{-|x-\xi|} d\mu(\xi), \quad x\in\R.
 \end{align}
 \end{proposition}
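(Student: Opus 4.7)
The plan is to mimic the approach used in Proposition~\ref{proptraceformulas}, namely apply~\eqref{eqncprodatzero} to the product representation~\eqref{eqnF}. Setting $\Lambda(z) = s(z,a+\period)/(2\sinh(\period/2))$, we have
\[
\csum_{i\in\inds} \frac{1}{\kappa_i} = -\dot\Lambda(0), \qquad \sum_{i\in\inds}\frac{1}{\kappa_i^2} = \dot\Lambda(0)^2 - \ddot\Lambda(0),
\]
so the task reduces to computing $\dot{s}(0,a+\period)$ and $\ddot{s}(0,a+\period)$ using the explicit formulas~\eqref{eqnds0} and~\eqref{eqndds0} from Proposition~\ref{propcsent}.

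The first trace formula~\eqref{eqnTFD1} is immediate: evaluating~\eqref{eqnds0} at $x = a+\period$ and invoking $\period$-periodicity of $u$ (so $u(a+\period)=u(a)$) gives
\[
\dot{s}(0,a+\period) = 4\sinh(\period/2)\, u(a) - 2\cosh(\period/2)\int_a^{a+\period} u(\xi)d\xi,
\]
and dividing by $-2\sinh(\period/2)$ produces the claimed expression. For the second formula, I would evaluate~\eqref{eqndds0} at $x=a+\period$, form $\dot\Lambda(0)^2-\ddot\Lambda(0)$, and observe that the squared-integral terms and the integral-against-$d\mu$ terms combine via the identity $\coth^2-1=1/\sinh^2$ to yield the first two summands of~\eqref{eqnTFD2}, leaving only the integrals $\int_a^{a+\period} u(\xi)^2\cosh(\xi-a-\period/2)d\xi$ and $\int_a^{a+\period}\cosh(\xi-a-\period/2)d\mu(\xi)$ (divided by $\sinh(\period/2)$) as a residual contribution.

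The main obstacle — and the only non-routine step — is identifying this residual contribution with $-8P(a)$. For this I would prove the auxiliary identity that for any locally integrable $\period$-periodic function $f$ (or $\period$-periodic Borel measure),
\[
\int_\R \E^{-|a-\xi|} f(\xi)\, d\xi = \frac{1}{\sinh(\period/2)}\int_a^{a+\period} f(\xi)\cosh\!\Bigl(\xi-a-\frac{\period}{2}\Bigr) d\xi.
\]
This follows by splitting $\R$ into translates $[a+n\period,a+(n+1)\period]$, using periodicity to pull $f$ outside, and summing the resulting two geometric series in $\E^{-\period}$: for $t = \xi-a\in[0,\period]$, one finds
\[
\sum_{n\in\Z}\E^{-|t+n\period|} = \frac{\E^{-t}+\E^{t-\period}}{1-\E^{-\period}} = \frac{\cosh(t-\period/2)}{\sinh(\period/2)}.
\]
Applied with $f = u^2$ and then $f\,d\xi$ replaced by $d\mu$, this matches the residual terms precisely with $8P(a)$ (recall $P$ carries a factor $1/4$ and each $u^2/d\mu$ term contributes a factor $2$), completing~\eqref{eqnTFD2}.

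In summary, after the formulas of Proposition~\ref{propcsent} are plugged in, the entire proof is an algebraic manipulation plus the Poisson-type summation identity above; no further spectral input is required.
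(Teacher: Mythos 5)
Your proposal is correct and follows essentially the same route as the paper: apply~\eqref{eqncprodatzero} to the product~\eqref{eqnF} and plug in~\eqref{eqnds0} and~\eqref{eqndds0}, with the only work being the identification of the residual integrals $\int_a^{a+\period}\cosh(\xi-a-\period/2)\,(u^2\,d\xi+d\mu)$ with $4\sinh(\period/2)P(a)$. The paper gets this last identification by integrating by parts and using that $P$ solves $P-P''=\tfrac{u^2+\mu}{2}$, whereas you periodize the kernel $\E^{-|a-\xi|}$ and sum the geometric series directly — an equivalent computation, and your version checks out (including the case of atoms of $\mu$ at $a$ under the half-open interval convention).
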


\begin{proof}
 The first identity follows from the first formula in~\eqref{eqncprodatzero} applied to the product~\eqref{eqnF} in conjunction with~\eqref{eqnds0}.  
For the second identity, we observe that integration by parts turns~\eqref{eqndds0} into 
 \begin{align*}
    \ddot{s}(0,a+\period) & = 2\sinh(\period/2)\biggl(\int_a^{a+\period} u(x)dx\biggr)^2 - 8 \cosh(\period/2) u(a)\int_a^{a+\period} u(x)dx \\ 
     & \qquad - 4\cosh(\period/2) \int_a^{a+\period} d\mu  + 8 \sinh(\period/2)u(a)^2 + 16 \sinh(\period/2)P(a),
 \end{align*}
 upon noting that the function $P$ is a solution of the differential equation
  \begin{align*}
  P - P'' =\frac{u^2+\mu}{2}. 
 \end{align*} 
 Now the claim readily follows from the second formula in~\eqref{eqncprodatzero} applied to the product~\eqref{eqnF}.
 \end{proof} 

 Let us point out that by comparing the trace formulas for the periodic/anti\-periodic eigenvalues in~\eqref{eqnTF1c} and~\eqref{eqnTF2c} with the trace formulas for the Dirichlet eigenvalues in~\eqref{eqnTFD1} and~\eqref{eqnTFD2}, we see that it is always possible to explicitly recover the quantities $u(a)$ and $P(a)$ from these three spectra.

 \section{Continuous dependence}\label{secCE}

  The goal of this section is to prove that the periodic/antiperiodic eigenvalues as well as the Dirichlet eigenvalues depend continuously on the pair $(u,\mu)$ with respect to a weak$^\ast$ topology.
 More precisely, we endow the set $\Dper$ with the initial topology with respect to the functionals 
\begin{align}\label{eqnFctweakstar}
 (u,\mu) & \mapsto \int_\R u(x) h(x)dx + \int_\R u'(x) h'(x)dx + \int_\R g\, d\mu
\end{align}
 for all functions $h\in H^1_{\cc}(\R)$ and $g\in C_\cc(\R)$.
 In order to state our results, let $(u_k,\mu_k)$ be a sequence of pairs in $\Dper$ and denote all associated quantities in the same way as for $(u,\mu)$ in the previous sections but with an additional subscript $k$. 
 
\begin{theorem} \label{thContM}
 If the pairs $(u_k,\mu_k)$ converge to $(u,\mu)$ in $\Dper$, then the Floquet discriminants $\Delta_k$ converge locally uniformly to $\Delta$.
\end{theorem}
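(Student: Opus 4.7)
The plan is to combine the continuous dependence of solutions to initial value problems (Lemma~\ref{lemAppIVPsolcon}) with a normal family argument to upgrade the resulting pointwise convergence in $z$ to locally uniform convergence.

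Since $\dip$ has at most countably many atoms and is $\period$-periodic, we may choose the base point $a$ so that $\dip$ has no mass at $a$ (hence, by periodicity, also none at $a+\period$); Remark~\ref{remBaseInd} ensures this choice is immaterial. The weak$^\ast$ convergence of $(u_k,\mu_k)$ directly yields $u_k\to u$ weakly in $H^1_\loc(\R)$ and $\int g\,d\mu_k\to\int g\,d\mu$ for all $g\in C_\cc(\R)$. Since $H^1_\loc(\R)$ embeds compactly into $L^2_\loc(\R)$, the former gives $\int h u_k^2\,dx\to\int h u^2\,dx$ for $h\in C_\cc(\R)$; combining these via $d\mu-u^2\,dx=u'^2\,dx+d\dip$ yields
\begin{align*}
\int_\R h u_k'(x)^2\,dx+\int_\R h\,d\dip_k\;\longrightarrow\;\int_\R h u'(x)^2\,dx+\int_\R h\,d\dip,\quad h\in C_\cc(\R),
\end{align*}
which is precisely the missing hypothesis of Lemma~\ref{lemAppIVPsolcon}.

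Applying that lemma to $c_k(z,\cdot)$ and $s_k(z,\cdot)$ with the $k$-independent initial conditions~\eqref{eqnFSentIC} at $a$, we obtain weak $H^1_\loc(\R)$ convergence of these to $c(z,\cdot)$ and $s(z,\cdot)$ respectively (yielding pointwise convergence at $a+\period$ via the Sobolev embedding into $C(\R)$) together with convergence of the quasi-derivatives $c_k^\qd(z,a+\period)\to c^\qd(z,a+\period)$ and $s_k^\qd(z,a+\period)\to s^\qd(z,a+\period)$ at the point $a+\period$ of zero $\dip$-mass. Consequently $\Delta_k(z)\to\Delta(z)$ for every fixed $z\in\C$.

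To promote this to locally uniform convergence we invoke Vitali's theorem; it only remains to establish local uniform boundedness of $\{\Delta_k\}$ on $\C$. This follows from a Gronwall argument applied to the integral equation for the pair $(f,f^\qd)$ obtained from~\eqref{eqnDEfqd} after the substitution $f'=f^\qd+zu_k'f$: one obtains an estimate of the form
\begin{align*}
|c_k(z,x)|+|s_k(z,x)|+|c_k^\qd(z,x)|+|s_k^\qd(z,x)|\leq A\E^{B(|z|+|z|^2)},\quad x\in[a,a+\period],
\end{align*}
with constants $A,B$ depending only on $\|u_k\|_{H^1([a,a+\period])}$ and $\mu_k([a,a+\period])$. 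The weak$^\ast$ convergence, combined with the uniform boundedness principle, forces both quantities to be uniformly bounded in $k$, so $\{\Delta_k\}$ is locally uniformly bounded on $\C$, and Vitali's theorem yields locally uniform convergence $\Delta_k\to\Delta$. The main technical hurdle is this uniform Gronwall estimate; once in place, the continuous dependence result does the rest of the work.
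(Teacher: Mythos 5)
Your overall architecture (choose $a$ with no $\dip$-mass, verify the hypotheses of Lemma~\ref{lemAppIVPsolcon} to get pointwise convergence of $\Delta_k$, then a normal-family argument) matches the paper, but there is a genuine gap at the very first step: the claim that convergence in $\Dper$ ``directly yields'' $u_k\to u$ weakly in $H^1_{\loc}(\R)$. The functionals~\eqref{eqnFctweakstar} only test $u_k$ against compactly supported $h\in H^1_{\cc}(\R)$, i.e.\ they give weak$^\ast$ convergence of the distributions $\omega_k=u_k-u_k''$; weak convergence in $H^1(I)$ for a compact interval $I$ requires convergence against arbitrary $h_I\in H^1(I)$, whose extension by zero is not admissible. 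Even granting a uniform local $H^1$ bound, convergence of $\omega_k$ only determines the limit of $u_k$ on $I$ up to an element of $\mathrm{span}\{\E^{x},\E^{-x}\}$, so the implication is false without using periodicity of the $u_k$; it is precisely here that the paper works, first extracting pointwise convergence $u_k(x_0)\to u(x_0)$ with a special periodized test function and then converting $H^1(I)$-pairings into $H^1_{\cc}(\R)$-pairings plus boundary terms via a $\sinh$-tail extension. You need some such argument (an alternative: the bound $\int_I u_k^2+u_k'^2\,dx\le\mu_k(I)$ from~\eqref{eqnmuac} plus weak$^\ast$ boundedness of $\mu_k$ gives local $H^1$ bounds, and any weak $H^1_{\loc}$ cluster point $v$ is periodic and satisfies $v-v''=u-u''$, hence equals $u$ since nonzero combinations of $\E^{\pm x}$ are not periodic). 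Since this weak convergence also underpins your verification of the remaining hypothesis of Lemma~\ref{lemAppIVPsolcon}, the gap sits at the core of the proof rather than at its periphery.

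The second half of your argument, by contrast, is a legitimate and genuinely different route from the paper's. Where the paper upgrades pointwise to locally uniform convergence by noting that the $\Delta_k$ lie in the P\'olya class with $\dot{\Delta}_k(0)$, $\ddot{\Delta}_k(0)$ bounded and then invoking the Langer--Woracek compactness lemma, you propose a Gronwall bound for the Volterra system satisfied by $(f,f^\qd)$, uniform in $k$ because the constants depend only on $\|u_k\|_{H^1}$ and $\mu_k$ over one period (note $\dip_k\le\mu_k$ by~\eqref{eqnDefdip}), followed by Vitali's theorem. The cruder growth $A\E^{B(|z|+|z|^2)}$ is harmless for local boundedness, and this avoids the de Branges/P\'olya-class machinery entirely; it is arguably more elementary. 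Two small repairs: the uniform bound on $\mu_k([a,a+\period])$ follows simply by testing against a fixed nonnegative $g\in C_\cc(\R)$ with $g\ge1$ on $[a,a+\period]$ (no uniform boundedness principle needed), and the uniform $H^1$ bound on $u_k$ is cleanest via~\eqref{eqnmuac} rather than via Banach--Steinhaus, which by itself only controls the component of $u_k$ orthogonal to the solutions of $h''=h$.
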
 
 
\begin{proof}
Suppose that the sequence $(u_k, \mu_k)$ converges to $(u,\mu)$ in $\Dper$. 
Upon choosing the test function  
\begin{align*}
  h(x) = \begin{cases} \E^{2\period - |x-x_0|} - \E^{|x-x_0|}, & x\in[x_0-\period,x_0+\period], \\ 0, & x\not\in[x_0-\period,x_0+\period], \end{cases}
\end{align*}
in~\eqref{eqnFctweakstar}, we see that $u_k$ converges to $u$ in every point $x_0\in\R$. 
Moreover, if $I=[c_1,c_2]$ is a compact interval and $h_I$ is a function in $H^1(I)$, then we have 
\begin{align*}
 & \int_I f(x)h_I(x)dx +  \int_I f'(x)h_I'(x)dx  \\
 & \qquad = \int_\R f(x) h(x)dx + \int_\R f'(x) h'(x)dx -\tanh(\period/2) (f(c_1)h(c_1)+f(c_2)h(c_2))
\end{align*}
for every $f\in H^1_\per(\R)$, where $h\in H^1_\cc(\R)$ is defined by 
\begin{align*}
  h(x)  = \begin{cases}
    h_I(c_1)\sinh(\period)^{-1}\sinh(x-c_1+\period), & x\in[c_1-\period,c_1), \\
    h_I(x), & x\in [c_1,c_2], \\
    h_I(c_2)\sinh(\period)^{-1}\sinh(c_2+\period-x), & x\in(c_2,c_2+\period], \\
    0, & x\not\in[c_1-\period,c_2+\period].
  \end{cases}
\end{align*}
It now follows readily that $u_k|_I$ converges to $u|_I$ weakly in $H^1(I)$, from which we conclude that $u_k$ converges to $u$ weakly in $H^1_\loc(\R)$. 
As this entails locally uniform convergence, we see that  
\begin{align*}
    \int_\R  u_k'(x)^2 g(x) dx +  \int_\R g\, d\dip_k  = \int_\R g \, d \mu_k - \int_\R u_k(x)^2 g(x) dx 
  \end{align*}
  converges to 
  \begin{align*}
     \int_\R g \, d \mu - \int_\R u(x)^2 g(x) dx =  \int_\R u'(x)^2 g(x) dx +  \int_\R g\, d\dip
  \end{align*}
  for all functions $g\in C_\cc(\R)$.

Since the Floquet discriminants are independent of the chosen base point $a$, we can assume that the measure $\dip$ has no mass in $a$. 
By Lemma~\ref{lemAppIVPsolcon}, the Floquet discriminants $\Delta_k$ then converge pointwise to $\Delta$. 
 In order to show that the convergence is in fact locally uniform, first note that by Proposition~\ref{propcsent} and Lemma~\ref{lemDelta}, the functions $\Delta_k$ are real entire of Cartwright class with only real and simple roots. 
 Therefore, they belong to the P\'olya class (see \cite[Problem 9]{dB68}) and we see from~\eqref{eqndDel0} and~\eqref{eqnddDel0} that the sequences $\dot{\Delta}_k(0)$ and $\ddot{\Delta}_k(0)$ are bounded. 
 It now remains to employ a compactness argument using~\cite[Lemma 4.4]{lawo}. 
\end{proof} 

As the periodic/antiperiodic eigenvalues are precisely the zeros of $\Delta\mp1$, it now follows from Hurwitz's theorem (see \cite[Theorem~VII.2.5]{con} for example) that the periodic/antiperiodic spectrum depends continuously on the pair $(u,\mu)$. 
That the same is true for the Dirichlet spectrum as well can be seen from our last result. 

\begin{theorem}\label{thContD}
 If the pairs $(u_k,\mu_k)$ converge to $(u,\mu)$ in $\Dper$, then the entire functions $s_k(\ledot,a+\period)$ converge locally uniformly to $s(\ledot,a+\period)$. 
\end{theorem}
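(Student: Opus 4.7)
The plan is to mirror the proof of Theorem~\ref{thContM}, replacing the Floquet discriminant $\Delta$ by the Dirichlet characteristic function $s(\ledot,a+\period)$, and to proceed in three stages.

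First, I would invoke the opening portion of the proof of Theorem~\ref{thContM}, which already established — purely from $(u_k,\mu_k)\to(u,\mu)$ in $\Dper$ — that $u_k\to u$ weakly in $H^1_\loc(\R)$ (hence locally uniformly) and that
\begin{align*}
  \int_\R u_k'(x)^2 g(x)\, dx+\int_\R g\, d\dip_k\longrightarrow\int_\R u'(x)^2 g(x)\, dx+\int_\R g\, d\dip
\end{align*}
for every $g\in C_\cc(\R)$. These are precisely the hypotheses of Lemma~\ref{lemAppIVPsolcon}.

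Second, assuming that $\dip$ has no mass at $a$, I would apply Lemma~\ref{lemAppIVPsolcon} with the initial conditions $f_k(a)=0$, $f_k^\qd(a)=1$ that define $s_k(z,\ledot)$. The resulting weak $H^1_\loc(\R)$ convergence of $s_k(z,\ledot)$ to $s(z,\ledot)$ delivers, via Sobolev embedding, the pointwise convergence $s_k(z,a+\period)\to s(z,a+\period)$ for every $z\in\C$. I would then upgrade this pointwise convergence to locally uniform convergence on $\C$ by a P\'olya-class compactness argument identical to the one used at the end of the proof of Theorem~\ref{thContM}. By Proposition~\ref{propcsent} and Proposition~\ref{prop:s}, the entire functions $s_k(\ledot,a+\period)$ are real of Cartwright class with only real and simple zeros, and therefore belong to the P\'olya class. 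The value $s_k(0,a+\period)=2\sinh(\period/2)$ is constant in $k$ by~\eqref{eqnCSatzero}, while the explicit expressions \eqref{eqnds0}--\eqref{eqndds0}, together with the pointwise convergence $u_k(a)\to u(a)$, the locally uniform convergence of $u_k$ and the weak$^\ast$ convergence of $\mu_k$, provide uniform bounds on $\dot s_k(0,a+\period)$ and $\ddot s_k(0,a+\period)$. Invoking \cite[Lemma~4.4]{lawo} then closes the argument.

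The main obstacle I anticipate is the proviso in Lemma~\ref{lemAppIVPsolcon} that $\dip$ carry no mass at $a$: unlike the Floquet discriminant, $s(\ledot,a+\period)$ genuinely depends on the base point $a$, so this restriction cannot simply be sidestepped by reselecting $a$ as in the proof of Theorem~\ref{thContM}. Should $\dip$ (or some $\dip_k$) have an atom at $a$, I would reduce to the good case by expressing $s(\ledot,a+\period)$ as a linear combination of the entries of a fundamental system based at a nearby point $a'\in(a,a+\period)$ that is an atom of neither $\dip$ nor any $\dip_k$ — such points form a dense set in any neighborhood of $a$ — and then exploit periodicity together with the already established convergence at $a'$ to transfer the desired values to $a$ and $a+\period$.
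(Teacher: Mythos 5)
Your proposal is correct and follows essentially the same route as the paper: Lemma~\ref{lemAppIVPsolcon} for pointwise convergence of $s_k(\ledot,a+\period)$, followed by the same P\'olya-class compactness argument based on \eqref{eqnds0} and \eqref{eqndds0}. The only cosmetic difference is that the paper does not split into cases: it always re-bases at a point $\tilde{a}$ where $\dip$ has no mass and uses the Wronskian-type identity $s_k(z,x)=\tilde{c}_k(z,a)\tilde{s}_k(z,x)-\tilde{s}_k(z,a)\tilde{c}_k(z,x)$, whose coefficients involve only function values (so no periodicity or atom-freeness of the $\dip_k$ is needed), which is exactly the transfer you sketch for the case of an atom at $a$.
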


\begin{proof}
 Pick a base point $\tilde{a}\in\R$ such that the measure $\dip$ has no mass in $\tilde{a}$ and consider the corresponding fundamental system of solutions $\tilde{c}_k(z,\redot)$, $\tilde{s}_k(z,\redot)$ of the differential equation~\eqref{eqnDEhok} with the initial conditions 
  \begin{align*}
   \tilde{c}_k(z,\tilde{a}) & = \tilde{s}_k^\qd(z,\tilde{a}) = 1, & \tilde{c}_k^\qd(z,\tilde{a}) &  = \tilde{s}_k(z,\tilde{a}) = 0, 
 \end{align*} 
 for every $z\in\C$.
Clearly, we have
\begin{align*}
s_k(z,x) =  \tilde{c}_k(z,a)  \tilde{s}_k(z,x) -\tilde{s}_k(z,a) \tilde{c}_k(z,x), \quad x\in\R,~z\in\C,
\end{align*}
so that we can apply Lemma~\ref{lemAppIVPsolcon} to the right-hand side, which shows that $s_k(\ledot,a+\period)$ converges to $s(\ledot,a+\period)$ pointwise. 
Now the claim follows after another compactness argument as in the proof of Theorem~\ref{thContM} using~\eqref{eqnds0} and~\eqref{eqndds0}.
\end{proof}


\end{document}